\theoremstyle{plain}
\newtheorem{theorem}{Theorem}
\newtheorem{lemma}[theorem]{Lemma}
\newtheorem{cor}[theorem]{Corollary}
\newtheorem*{theorem*}{Theorem}
\begin{document}

\title{An inversion formula for the X-ray normal operator over closed hyperbolic surfaces}
\author{Sean Richardson}
\email{seanhr@uw.edu}
\pagestyle{plain}

\begin{abstract}
	We construct an explicit inversion formula for Guillarmou's normal operator on closed surfaces of constant negative curvature. This normal operator can be defined as a weak limit for an ``attenuated normal operator'', and we prove this inversion formula by first constructing an additional inversion formula for this attenuated normal operator on both the Poincar\'e disk and closed surfaces of constant negative curvature. A consequence of the inversion formula is the explicit construction of invariant distributions with prescribed pushforward over closed hyperbolic manifolds.
\end{abstract}

\maketitle

\section{Introduction}

We consider connected closed surfaces $(M,g)$ of constant negative curvature with set $\mc{G}$ of  closed geodesics with unit-speed. Such surfaces have \emph{X-ray transform} $I_0: C^{\infty}(M) \to \ell^{\infty}(\mc{G})$ defined by integrating over each closed geodesic by
\begin{equation*}
(I_0 f)(\gamma) \defeq \int_{0}^{\operatorname{L}(\gamma)} f(\gamma(t))dt, 
\quad f \in C^{\infty}(M), 
\quad \gamma \in \mc{G}
\end{equation*}
where $\operatorname{L}(\gamma)$ denotes the length of $\gamma$. For such surfaces, there is a unique unit-speed closed geodesic $\gamma \in \mc{G}$ in each free homotopy class. In fact, closed geodesics are parametrized by the free homotopy classes for all \emph{Anosov manifolds}, defined in Section~\ref{sec:anosov}, which in particular includes all closed manifolds of strictly negative sectional curvature. This same definition of the X-ray transform extends to all Anosov manifolds and a natural point of study is the injectivity of this operator. 

Guillemin and Kazhdan \cite{guillemin1980some} showed injectivity for surfaces with non-positive curvature, Croke and Sharafutdinov \cite{croke1998spectral} generalized this to higher dimensional manifolds with non-positive sectional curvature, then Dairbekov and Sharafutdinov \cite{dairbekov2003some} obtained injectivity of $I_0$ for Anosov manifolds in general. Guillarmou \cite{guillarmou2017invariant} provided a microlocal proof of the Liv\v sic theorem \cite{livsic1972cohomology} which gives a proof of the injectivity of $I_0$ on an Anosov manifold $M$ that generalizes to the case of tensors and has applications to the classical question of spectral rigidity \cite{guillarmou2019marked}. In particular, Guillarmou defined an important operator $\Pi: C^{\infty}(SM) \to C^{\infty}(SM)$ on the unit sphere bundle $\pi_0: SM \to M$ (see Section~\ref{sec:pre}) and shows injectivity by proving $I_0 f = 0$ implies $\Pi \pi_0^* f = 0$, which then implies $f = 0$. That is, the operator $\Pi \pi_0^*$ is injective (over functions with average value $0$) and the subject of this paper is an explicit inversion formula for $\Pi \pi_0^*$ on closed surfaces of constant negative Gaussian curvature $K < 0$. In fact, the result is slightly stronger, for we invert Guillarmou's \emph{normal operator} $\Pi_0 = \pi_{0*}\Pi\pi_0^*$ by defining the operator
\begin{equation}
	S_{K}f \defeq \int_{S_xM}\int_{0}^{\infty}e^{-\sqrt{-K} \cdot t}f(\gamma_{x,v}(t))dtdS_x(v),
	\quad f \in C^{\infty}(M)
	\label{eq:S}
\end{equation}
where $\gamma_{x,v}(t)$ denotes the unique geodesic on $M$ satisfying $\gamma_{x,v}(0) = x$, $\dot{\gamma}_{x,v}(0) = v$, and the measure $dS_x$ on fiber $S_xM$ is defined in Section~\ref{sec:SM}. Applying this operator followed by the Laplace-Beltrami operator $\Delta$ gives the following inversion formula.
\begin{theorem}
	Given a smooth function $f$ with average value zero over a connected closed surface $M$ of constant curvature $K < 0$ we have the inversion formula
	\begin{equation}
		\Delta S_K \Pi_0 f = -8\pi^2 f.
	\end{equation}
\label{thm:inv-formula}
\end{theorem}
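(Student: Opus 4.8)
The plan is to work on the Poincaré disk $\mathbb{H}^2$ first, since $\Pi_0$ on a closed hyperbolic surface is built from the corresponding operator on the universal cover by a standard averaging/periodization argument, and the differential operator $\Delta$ together with the exponentially weighted geodesic average $S_K$ are both local enough (or decay fast enough) to descend to the quotient. So the first step is to identify, on $\mathbb{H}^2$, an explicit integral kernel for $\Pi_0 = \pi_{0*}\Pi\pi_0^*$. Following Guillarmou's construction, $\Pi$ is obtained from the resolvent of the geodesic vector field $X$ at zero, $\Pi = \pi_{0*}(R_+ - R_-)\pi_0^*$ up to constants; on hyperbolic space these resolvents are governed by the spectral theory of the Laplacian, so $\Pi_0$ should be a radial convolution operator whose kernel $k(r)$ (a function of hyperbolic distance $r = d(x,y)$) can be written down in closed form — I expect something like a multiple of a Legendre/hypergeometric function, or more concretely the kernel associated with $(\Delta - 1/4)^{-1/2}$ type behavior, matching the known fact that $\Pi_0$ is (up to smoothing/constants) comparable to $(-\Delta)^{-1/2}$ in the surface case.

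Next I would compute $S_K$ applied to a radial kernel. Since $S_K f(x) = \int_{S_xM}\int_0^\infty e^{-\sqrt{-K}\,t} f(\gamma_{x,v}(t))\,dt\,dS_x(v)$, switching to geodesic polar coordinates around $x$ turns this into $\int_0^\infty e^{-\sqrt{-K}\,t}\big(\text{(unnormalized) spherical average of } f \text{ at radius } t\big)\,dt$ times the area element factor; on $\mathbb{H}^2$ with $K=-1$ the Jacobian is $\sinh t$, so $S_K$ becomes an explicit radial integral transform. Composing $S_K$ with $\Pi_0$ then reduces, by associativity of radial convolution on the hyperbolic plane, to computing a single radial function $h(r) = (S_K \star k)(r)$ — an integral of products of $e^{-t}$, $\sinh t$, and the $\Pi_0$-kernel. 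The key computational step is to show that $\Delta$ applied to this radial function $h(r)$ produces (a constant multiple of) the Dirac delta at the origin, i.e. that $h$ is, up to the constant $-8\pi^2$, a fundamental solution of $\Delta$ on $\mathbb{H}^2$. Concretely I would verify $\Delta_r h = 0$ for $r > 0$ (where $\Delta_r = \partial_r^2 + \coth r\,\partial_r$ is the radial Laplacian), compute the coefficient of the logarithmic singularity of $h$ as $r \to 0$ (which detects the delta mass via $\Delta \log r = 2\pi \delta_0$ in two dimensions), and check the behavior at infinity is harmless after periodization.

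The main obstacle I anticipate is twofold: first, getting the closed-form kernel of $\Pi_0$ on $\mathbb{H}^2$ correct, including all constants — this requires carefully unwinding Guillarmou's definition of $\Pi$ through the resolvents $R_\pm$ of $X$ and the pushforward $\pi_{0*}$, and relating it to the Fourier–Helgason transform or to explicit Green's functions, and it is here that factors of $2\pi$, $4\pi$, and signs proliferate. Second, justifying the descent from $\mathbb{H}^2$ to the closed surface $M$: one must check that the periodized kernel still makes sense (the sum over the deck group converges, using exponential decay of the $\Pi_0$-kernel and of $e^{-\sqrt{-K} t}$), that $S_K$ on $M$ — defined by integrating over possibly self-intersecting geodesic rays that wrap around $M$ — equals the periodization of $S_K$ on $\mathbb{H}^2$, and that the average-value-zero hypothesis on $f$ is exactly what is needed to kill the constant (zero-mode) ambiguity, since on the disk the fundamental solution is unique only up to harmonic functions and on $M$ the constants are the obstruction. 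Once the radial identity $\Delta S_K \Pi_0 = -8\pi^2\,\mathrm{Id}$ is established on functions orthogonal to constants, the theorem follows.
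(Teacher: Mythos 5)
Your disk-level plan is broadly consistent with what the paper does there: on $\D$ both $\Pi_0^{(z)}$ and $S^{(z)}$ are radial convolution operators with explicit kernels ($\sim e^{-zd}\sinh(d)^{-1}$ and $e^{-(z+1)d}\sinh(d)^{-1}$), and verifying the inversion amounts to showing the composed radial kernel is, up to the constant $-8\pi^2$, a fundamental-solution-type object for the relevant operator; the paper does this via the spherical transform and a Gamma-function identity rather than by checking the ODE and the logarithmic singularity, but that difference is cosmetic.

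The genuine gap is in the descent to the closed surface. You assert that $\Pi_0$ on $M$ is ``built from the corresponding operator on the universal cover by a standard averaging/periodization argument,'' but this is exactly what fails at $z=0$. The unattenuated kernel on $\D$ decays like $e^{-d}$, while the orbit of the cocompact deck group $\Gamma$ has $\#\{\gamma : d(x,\gamma y)\le R\}\sim e^{R}$; the periodization $\sum_{\gamma\in\Gamma} \sinh\bigl(d(x,\gamma y)\bigr)^{-1}$ is a Poincar\'e series at the critical exponent and diverges. Equivalently, on the closed surface $\int_{\R}\varphi_t^*\,dt$ does not converge (geodesics are recurrent), and $\Pi$ is \emph{defined} through the meromorphic continuation of the resolvents $(X\pm z)^{-1}$ to $z=0$, which have a pole there with residue $\propto 1\otimes 1$; the mean-zero hypothesis removes the pole only in a weak/spectral sense, not by making any kernel sum converge. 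This is why the paper introduces the attenuated operators $\Pi_0^{(z)}$, $S_K^{(z)}$ for $\re z>0$ (where the lift/periodization does converge, and where it proves $(\Delta - z(z+\sqrt{-K}))S_K^{(z)}\Pi_0^{(z)}f=-8\pi^2 f$ by lifting to $\D$ with cutoff estimates), and then takes $z\to 0$. That limit is the hard analytic step your outline omits: one must show $z\mapsto \Pi_0^{(z)}f$ is continuous up to $z=0$ in $H^s(M)$ for mean-zero $f$, which the paper obtains from the Faure--Sj\"ostrand anisotropic Sobolev spaces, wavefront-set control of $\pi_{0*}$, and Klingenberg's transversality $\V\cap(E_u\oplus\R X)=\{0\}$, plus a uniform-boundedness argument to get joint continuity of $S^{(z)}$ and $L^{(z)}$. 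Without an attenuation (or some substitute making sense of $\Pi_0$ on $M$ and of the limit), your periodization step is not just technically incomplete --- the object you propose to periodize does not exist.
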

Observe the above theorem reduces the problem of reconstructing $f$ from $I_0 f$ to the problem of constructing $\Pi_0 f$ from $I_0 f$ for surfaces of constant negative curvature. Furthermore, this inversion formula provides an explicit construction of invariant distributions with prescribed pushforward in the case of a closed hyperbolic surface $M$. That is, given any $f \in C^{\infty}(M)$, we construct a distribution $w \in H^s(M)$ for all $s < 0$ such that $Xw = 0$ and $\pi_{0*}w = f$,
which is the content of Corollary~\ref{cor:inv-dist}. Such invariant distributions have been of recent interest \cite{2014-paternain-salo-uhlmann-spectral, 2015-paternain-salo-uhlmann-invariant, guillarmou2017invariant} due to their relationship to the X-ray transform and application to inverse problems.

Notice the presence of the Laplacian in the Theorem~\ref{thm:inv-formula} formula aligns with the observation of Guillarmou and Monard that $\Pi_0$ is a function of the Laplacian using representation theory \cite[Remark A.2]{guillarmou2017reconstruction}. There is likely a representation theory approach to proving this inversion formula, but we favor a more geometric approach, using Helgason's theory to work on the Poincar\'e disk.

There is a long history of X-ray inversion formulas \cite{radon1917determination, pestov2004characterization, guillarmou2017reconstruction} in the case of the X-ray transform on surfaces with boundary and Theorem~\ref{thm:inv-formula} is a first step to obtaining such an inversion formula in the Anosov case. Note explicit inversion formulas for the X-ray transform in the boundary case are only known for constant curvature surfaces (\cite{pestov2004characterization} for simple surfaces and \cite{guillarmou2017reconstruction} for surfaces with hyperbolic trapped set), but these formulas extend to approximate inversion formulas in the case of variable curvature. A natural question is if Theorem~\ref{thm:inv-formula} can be extended to an approximate inversion formula over Anosov surfaces of variable curvature, preferably with an error term related to the error in the boundary case.


Theorem~\ref{thm:inv-formula} generalizes an X-ray reconstruction formula of Helgason \cite{helgason1959differential} for the Poincar\'e disk. We prove this by first inverting an ``attenuated normal operator'' $\Pi_0^{(z)} \defeq \pi_{0*}\Pi^{(z)}\pi_0^*$ where the ``attenuated $\Pi$ operator'' $\Pi^{(z)}: C^{\infty}(SM) \to C^{\infty}(SM)$ is defined by
\begin{equation}
	\Pi^{(z)} \defeq \int_{\R} e^{-|t|z} \varphi_t^* dt, \quad \re z > 0.
	\label{eq:Piz}
\end{equation}
Guillarmou observed \cite{guillarmou2017invariant} this operator converges weakly to the $\Pi$ operator as the attenuation coefficient $z$ approaches $0$ when acting on functions with average value zero. That is, over an Anosov manifold $M$ with $f \in C^{\infty}(SM)$ we have
\begin{equation}
	\lim_{z \to 0} \ip{\Pi^{(z)}f}{\psi} = \ip{\Pi f}{\psi}, \quad \psi \in C^{\infty}(SM)
	\label{eq:Piz-Pi}
\end{equation}
provided $\int_{SM} f d\Sigma = 0$ for the Sasaki volume form $d\Sigma$ as defined in Section~\ref{sec:SM}. The distributional pairing $\ip{\cdot}{\cdot}$ above is also with respect to the Sasaki volume form. To invert this $\Pi_0^{(z)}$ operator, we extend the operator $S_K$ of (\ref{eq:S}) to the operator
\begin{equation}
	S_{K}^{(z)}f \defeq \int_{S_xM}\int_{0}^{\infty}e^{-(z+\sqrt{-K}) \cdot t}f(\gamma_{x,v}(t))dtdS_x(v),
	\quad f \in C^{\infty}(M).
	\label{eq:Sz}
\end{equation}
Furthermore, we will write $S^{(z)} = S_{-1}^{(z)}$ and $S = S_{-1}$. Before inverting $\Pi_0^{(z)}$ over closed hyperbolic surfaces, we first obtain the following inversion formula on the Poincar\'e disk, which is interesting in its own right and can be interpreted as a reconstruction formula for the ``attenuated X-ray transform'' operator $\Pi^{(z)}\pi^{*}_0$.
\begin{theorem}
	Given a compactly supported smooth function $f$ over the Poincar\'e disk $\D$, for any $\re(z) > 0$ we have the inversion formula
	\begin{equation}
		(\Delta - z(z+1)) S^{(z)} \Pi^{(z)}_0 f = -8\pi^2 f.
		\label{eq:D-att-inv}
	\end{equation}
	\label{thm:D-att-inv}
\end{theorem}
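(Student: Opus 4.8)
The plan is to exploit the homogeneity of the disk together with Helgason's Fourier analysis on $\D = G/K$, $G = PSL(2,\R)$, $K = SO(2)$, to reduce \eqref{eq:D-att-inv} to a one-variable identity. Since the geodesic flow and the fibre measure $dS_x$ are isometry invariant, $\Pi^{(z)}_0$ and $S^{(z)}$ commute with the $G$-action, hence are convolution operators against radial (bi-$K$-invariant) kernels. Evaluating at the origin $o$ in geodesic polar coordinates — where geodesics through $o$ are radii and $d\mathrm{vol} = \sinh\rho\, d\rho\, d\omega$ — and folding the integral in \eqref{eq:Piz} onto $t>0$ via the symmetry $(t,v)\mapsto(-t,-v)$, one gets
\begin{equation*}
	\Pi_0^{(z)}f = f * p_z, \quad p_z(\rho) = \frac{2e^{-z\rho}}{\sinh\rho}, \qquad
	S^{(z)}f = f * q_z, \quad q_z(\rho) = \frac{e^{-(z+1)\rho}}{\sinh\rho},
\end{equation*}
where $*$ is convolution on $\D$; for $\re z > 0$ these kernels decay like $e^{-(\re z)\rho}$, so all integrals converge. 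Hence $S^{(z)}\Pi_0^{(z)}f = f * M_z$ with $M_z \defeq p_z * q_z$ radial, and since $\Delta$ commutes with radial convolution, \eqref{eq:D-att-inv} is equivalent to the distributional identity $(\Delta - z(z+1))M_z = -8\pi^2 \delta_o$ on $\D$ (apply to $f$ and use $f*\delta_o = f$).

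Next, pass to the spherical transform. Let $\phi_\lambda$ be the elementary spherical function ($\phi_\lambda(o)=1$, $\Delta\phi_\lambda = -(\lambda^2+\tfrac14)\phi_\lambda$ in the paper's sign convention) and let $\widehat k(\lambda) = 2\pi\int_0^\infty k(\rho)\phi_\lambda(\rho)\sinh\rho\, d\rho$ be the spherical transform of a radial function; it turns $*$ into multiplication, sends $\delta_o$ to $1$, intertwines $\Delta$ with multiplication by $-(\lambda^2+\tfrac14)$, and is injective on the radial distributions at hand. Since $-(\lambda^2+\tfrac14) - z(z+1) = -(\lambda^2 + (z+\tfrac12)^2)$, and the factors $1/\sinh\rho$ in $p_z, q_z$ cancel the $\sinh\rho$ in the transform measure, the desired identity becomes the scalar identity
\begin{equation*}
	\bigl(\lambda^2 + (z+\tfrac12)^2\bigr)\,\widehat{p_z}(\lambda)\,\widehat{q_z}(\lambda) = 8\pi^2,
	\qquad
	\widehat{p_z}(\lambda) = 4\pi F(z,\lambda), \quad \widehat{q_z}(\lambda) = 2\pi F(z+1,\lambda),
\end{equation*}
where $F(s,\lambda) \defeq \int_0^\infty e^{-s\rho}\phi_\lambda(\rho)\, d\rho$.

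Everything thus reduces to evaluating $F$ in closed form and telescoping. Using the representation $\phi_\lambda(\rho) = \tfrac{1}{2\pi}\int_0^{2\pi}(\cosh\rho - \sinh\rho\cos\theta)^{-\frac12-i\lambda}\, d\theta$, interchanging integrals, evaluating the $\rho$-integral (a Beta integral after $t = e^{2\rho}$) and then the $\theta$-integral, one obtains a ratio of Gamma functions; with $\alpha = \tfrac14 - \tfrac{i\lambda}{2}$, $\beta = \tfrac14 + \tfrac{i\lambda}{2}$,
\begin{equation*}
	F(s,\lambda) = \frac{\Gamma(\tfrac{s}{2}+\alpha)\,\Gamma(\tfrac{s}{2}+\beta)}{2\,\Gamma(\tfrac{s}{2}+\alpha+\tfrac12)\,\Gamma(\tfrac{s}{2}+\beta+\tfrac12)}.
\end{equation*}
Because $\tfrac{z+1}{2} = \tfrac{z}{2}+\tfrac12$, the product $F(z,\lambda)F(z+1,\lambda)$ telescopes: two Gamma factors cancel outright and the remaining ones collapse via $\Gamma(w+1)=w\Gamma(w)$, leaving $F(z,\lambda)F(z+1,\lambda) = \tfrac14\bigl[(\tfrac{z}{2}+\alpha)(\tfrac{z}{2}+\beta)\bigr]^{-1} = \bigl(\lambda^2 + (z+\tfrac12)^2\bigr)^{-1}$. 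Hence $\widehat{p_z}(\lambda)\widehat{q_z}(\lambda) = 8\pi^2\bigl(\lambda^2+(z+\tfrac12)^2\bigr)^{-1}$, which is exactly the scalar identity; this telescoping is precisely what forces the pairing of the attenuation $z$ in $\Pi^{(z)}$ with $z+1 = z+\sqrt{-K}$ in $S^{(z)}$.

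The step I expect to be the main obstacle is the closed-form evaluation of $F(s,\lambda)$ — controlling branches of the fractional powers, the Beta/Gamma bookkeeping, and the range of $(s,\lambda)$ where the manipulations are valid (with meromorphic continuation in $\lambda$ afterwards) — together with pinning down every normalization (the measure $dS_x$, the $2\pi$'s in the transform and Plancherel conventions, the sign of $\Delta$) so that the constant comes out to exactly $-8\pi^2$. An equally viable route avoiding the spherical transform is to compute $M_z = p_z * q_z$ directly as an iterated integral over $\D$, recognize it as a constant multiple of the Legendre function $Q_{z-1/2}(\cosh\rho)$ — equivalently, of the Green's function of $\Delta - z(z+1)$ on $\D$ — and invoke the classical identity $(\Delta - z(z+1))\,\tfrac{1}{2\pi}Q_{z-1/2}(\cosh\rho) = \delta_o$, leaving only the normalizing constant to be checked.
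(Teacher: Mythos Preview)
Your proposal is correct and follows essentially the same route as the paper: write $\Pi_0^{(z)}$ and $S^{(z)}$ as radial convolutions (your $p_z,q_z$ are exactly the paper's $2\tau^{(z)},\sigma^{(z)}$), pass to the spherical transform, and reduce to the scalar identity $\bigl(\lambda^2+(z+\tfrac12)^2\bigr)\widehat{p_z}(\lambda)\widehat{q_z}(\lambda)=8\pi^2$, which is checked via the Gamma-function evaluation of $\int_0^\infty e^{-s\rho}\phi_\lambda(\rho)\,d\rho$ and the telescoping $\Gamma(w+1)=w\Gamma(w)$. The only cosmetic differences are that the paper first symmetrizes to radial $f$ rather than phrasing the reduction as $(\Delta-z(z+1))M_z=-8\pi^2\delta_o$, and it quotes the Gamma identity from Helgason rather than deriving it; your alternative Legendre/Green's-function remark is not used in the paper.
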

The above theorem and corresponding proof is an extension of Helgason's inversion formula for the unattenuated X-ray transform on the Poincar\'e disk \cite[Theorem 1.14, Chapter III]{helgason1980radon}. By descending to quotients of the Poincar\'e disk while normalizing with respect to curvature, we obtain this inversion formula on closed manifolds of constant negative curvature. The theorem below is an intermediate step to proving Theorem~\ref{thm:inv-formula}, but can be interpreted as a reconstruction formula for the attenuated X-ray transform operator $\Pi^{(z)}\pi^*_0$.
\begin{theorem}
	Given a smooth function $f$ over a connected closed surface $M$ of constant curvature $K < 0$, then for $\re(z) > 0$ we have the inversion formula
	\begin{equation}
		(\Delta - z(z+\sqrt{-K})) S_K^{(z)} \Pi^{(z)}_0 f = -8\pi^2 f.
		\label{eq:M-att-inv}
	\end{equation}
	\label{thm:M-att-inv}
\end{theorem}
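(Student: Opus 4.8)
The plan is to derive Theorem~\ref{thm:M-att-inv} from Theorem~\ref{thm:D-att-inv} by lifting to the universal cover and rescaling the metric. Write $M = \Gamma\backslash\widetilde{M}$, where $\widetilde{M}$ is the complete, simply connected surface of constant curvature $K$ and $\Gamma$ is a cocompact Fuchsian group acting by isometries; here $\widetilde{M}$ is just $\D$ equipped with the rescaled metric $(-K)^{-1}g_{\D}$, whose isometries coincide with those of $g_{\D}$. Given $f\in C^{\infty}(M)$, let $\widetilde{f} = f\circ\pi$ be its lift along the covering $\pi\colon\widetilde{M}\to M$, a smooth, bounded, $\Gamma$-periodic function. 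The first step is to record that $\Delta$, $S_K^{(z)}$ and $\Pi_0^{(z)}$ all intertwine with $\pi$: since $\pi$ is a local isometry, the lift of $\Delta_M u$ is $\Delta_{\widetilde{M}}\widetilde{u}$; and since $S_K^{(z)}$ and $\Pi_0^{(z)}$ are built from integrals over geodesics issued from a point, while $S_x M\cong S_{\tilde x}\widetilde{M}$ and every geodesic of $M$ is the projection of one on $\widetilde{M}$, the lift of $S_K^{(z)}G$ is $S_K^{(z)}\widetilde{G}$ and the lift of $\Pi_0^{(z)}G$ is $\Pi_0^{(z)}\widetilde{G}$, for any bounded $G$ on $M$ (denoting the analogous operators on $\widetilde{M}$ by the same symbols; these converge on bounded functions precisely because $\re z > 0$). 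Applying this twice reduces the theorem to the identity $(\Delta_{\widetilde{M}} - z(z+\sqrt{-K}))\,S_K^{(z)}\Pi_0^{(z)}\,\widetilde{f} = -8\pi^2\widetilde{f}$ on $\widetilde{M}$, now for a smooth, bounded, $\Gamma$-periodic $\widetilde{f}$.

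Next I would transport Theorem~\ref{thm:D-att-inv} from $\D$ to $\widetilde{M}$ by tracking how each operator scales under $g_{\D}\rightsquigarrow(-K)^{-1}g_{\D}$: unit-speed geodesics are reparametrized by $\sqrt{-K}$, so the geodesic flow rescales in time, the polar volume density passes from $\sinh r$ to $(\sqrt{-K})^{-1}\sinh(\sqrt{-K}\,r)$, and $\Delta$ scales by $-K$. Setting $z' = z/\sqrt{-K}$ (so that $\re z' > 0 \iff \re z > 0$), a short computation gives $\Delta_{\widetilde{M}} - z(z+\sqrt{-K}) = (-K)(\Delta_{\D} - z'(z'+1))$, $S_K^{(z)} = (-K)^{-1/2}S_{-1}^{(z')}$, and $\Pi_0^{(z)} = (-K)^{-1/2}\Pi_0^{(z')}$; the canonical identification of unit tangent circles carries $dS_x$ to $dS_x$, so no further constant appears. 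The three factors multiply to $(-K)\cdot(-K)^{-1/2}\cdot(-K)^{-1/2} = 1$, so Theorem~\ref{thm:D-att-inv} with parameter $z'$ yields $(\Delta_{\widetilde{M}} - z(z+\sqrt{-K}))\,S_K^{(z)}\Pi_0^{(z)}\,h = -8\pi^2 h$ for all $h\in C_c^{\infty}(\widetilde{M})$, with the same constant $-8\pi^2$.

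It remains to promote this identity from compactly supported $h$ to the bounded periodic $\widetilde{f}$. In geodesic polar coordinates one checks that $\Pi_0^{(z)}$ and $S_K^{(z)}$ on $\widetilde{M}$ are convolution against radial kernels proportional to $e^{-zr}/\sinh(\sqrt{-K}\,r)$ and $e^{-(z+\sqrt{-K})r}/\sinh(\sqrt{-K}\,r)$ respectively (here $r$ is the distance; the $\sinh$ in the denominator is exactly cancelled against the volume element by the polar density $(\sqrt{-K})^{-1}\sinh(\sqrt{-K}\,r)$), and both lie in $L^1(\widetilde{M})$ once $\re z > 0$. Hence $S_K^{(z)}\Pi_0^{(z)}$ has a radial Schwartz kernel $\mathcal{K}\in L^1(\widetilde{M})$, and the $C_c^{\infty}$ identity just proved is precisely the statement that $(\Delta_x - z(z+\sqrt{-K}))\,\mathcal{K}(x,y) = -8\pi^2\,\delta(x-y)$. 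For the bounded function $\widetilde{f}$ the function $u := S_K^{(z)}\Pi_0^{(z)}\widetilde{f} = \int_{\widetilde{M}}\mathcal{K}(\cdot,y)\widetilde{f}(y)\,dy$ is well-defined, and pairing $(\Delta - z(z+\sqrt{-K}))u$ against a test function and applying Fubini (valid because $\mathcal{K}\in L^1$ uniformly in $x$ and $\widetilde{f}$ is bounded) moves the differential operator onto the test function and collapses the delta, giving $(\Delta_{\widetilde{M}} - z(z+\sqrt{-K}))u = -8\pi^2\widetilde{f}$ in $\mathcal{D}'(\widetilde{M})$. Since every operator in sight commutes with $\Gamma$, $u$ is $\Gamma$-periodic and descends to $S_K^{(z)}\Pi_0^{(z)}f$ on $M$; the identity descends to (\ref{eq:M-att-inv}), and elliptic regularity makes it classical.

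The scaling bookkeeping of the second step is routine but delicate, the danger being a spurious Jacobian from $dS_x$ or a miscount of curvature powers that would corrupt the constant $-8\pi^2$. The genuine obstacle, however, is the third step: one must know that the composed kernel $\mathcal{K}$ is integrable over $\widetilde{M}$ — i.e.\ that the exponential decay supplied by $\re z > 0$ (from the $e^{-|t|z}$ weight, reinforced by the $1/\sinh$ coming from the polar density) dominates the exponential volume growth of $\widetilde{M}$. This is what makes $S_K^{(z)}\Pi_0^{(z)}$ meaningful on bounded periodic functions and what legitimizes interchanging $\Delta$ with the integral, and it is exactly the point at which the hypothesis $\re z > 0$ is used essentially.
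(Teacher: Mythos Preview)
Your argument is correct and shares the paper's scaffold: rescale to curvature $-1$ (your scaling identities reproduce Lemma~\ref{lem:norm} exactly, including the cancellation $(-K)\cdot(-K)^{-1/2}\cdot(-K)^{-1/2}=1$) and lift through the Riemannian cover (your intertwining claim is Lemma~\ref{lem:pi-op-comm}). The genuine difference is in how the identity on $\D$ is extended from $C_c^\infty$ to the bounded periodic lift $\widetilde f$. The paper introduces a radial cutoff $\chi_R$, applies Theorem~\ref{thm:D-att-inv} to the compactly supported piece $\chi_R\,\pi^*f$, and then drives the remainder $L^{(z)}S^{(z)}\Pi_0^{(z)}\bigl((1-\chi_R)\pi^*f\bigr)$ to zero as $R\to\infty$ by first reducing to radial $\pi^*f$ (so that $L^{(z)}$ may be commuted through the convolutions) and then invoking the explicit exponential bounds of Lemma~\ref{lem:eps}. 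You instead read the $C_c^\infty$ identity as the distributional equation $L^{(z)}\mathcal K = -8\pi^2\delta_0$ for the composed radial kernel $\mathcal K = 2\tau^{(z)}*\sigma^{(z)}$, and extend to bounded $\widetilde f$ by a single Fubini, justified by $\mathcal K\in L^1(\D)$. Your route is cleaner and sidesteps both the radial reduction and the pointwise estimates; the paper's route is more hands-on and makes the role of $\re z>0$ quantitatively visible at each stage. The one place you are slightly breezy is the $L^1$ bound on $\mathcal K$: that $\tau^{(z)},\sigma^{(z)}\in L^1(\D)$ is immediate from the $\sinh$ cancellation you note, and then $\mathcal K\in L^1$ follows from Young's inequality for convolution on the unimodular group $G=SU(1,1)$, but this deserves one explicit sentence since it is exactly what makes your Fubini step go through.
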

Note the lift of a nonzero function to the Poincar\'e disk will not be compactly supported, so Theorem~\ref{thm:D-att-inv} does apply directly and we must construct some bounds to control the contribution of the function far from the center of the disk. With the above theorem, we obtain Theorem~\ref{thm:inv-formula} by taking $z \to 0$, but obtaining continuity of the operator $\Pi_0^{(z)}$ at $0$ to make such an argument is non-trivial as this requires sufficiently chaotic dynamics of the geodesic flow. We make use of microlocal techniques together with Klingenberg's transversality result to obtain this continuity.
%
Throughout the paper we will use the notation
\begin{equation}
	L_K^{(z)} \defeq \Delta - z(z+\sqrt{-K})
\end{equation}
as this operator appears in Theorem~\ref{thm:M-att-inv} and we will write $L^{(z)} = L_{-1}^{(z)}$ to denote the operator appearing in Theorem~\ref{thm:D-att-inv}. For future use, note the definitions of the pullback $\pi_0^*$ and the pushforward $\pi_{0*}$ gives:
\begin{equation}
	\Pi_0^{(z)}f = \int_{S_xM}\int_{\R}e^{-z|t|}f(\gamma_{x,v}(t))dtdS_x(v),
	\label{eq:Piz-int}
\end{equation}
which provides the useful relation $S_K^{(z)} = \f{1}{2}\Pi_0^{(z+\sqrt{-K})}$.




\section{Preliminaries}
\label{sec:pre}

\subsection{Unit tangent bundle and geodesic flow}
\label{sec:SM}
For any Riemannian manifold $(M,g)$, the \emph{unit tangent bundle} is defined $SM = \{(x,v) \in TM : |v|_g = 1\}$ and comes with natural projection $\pi_0: SM \to M$. The unit tangent bundle is the phase space for the \emph{geodesic flow} $\varphi_t: SM \to SM$ given by $\varphi_t(x,v) = (\gamma_{x,v}(t), \dot{\gamma}_{x,v}(t))$ and has infinitesimal generator $X$, a smooth vector field on $SM$. Let $\X \defeq \R X$ denote the subbundle of $TSM$ spanned by $X$,  let  $\V \defeq \ker d\pi_0$ denote the \emph{vertical subbundle}, and take the \emph{horizontal subbundle} $\H$ to consist of the velocity vectors of all curves $(\gamma(t), W(t)) \in SM$ such that $\nabla_{\dot{\gamma}(t)}W = 0$ where $\nabla$ is the Levi-Civita connection. This provides the splitting
\begin{equation*}
	TSM = \X \oplus \H \oplus \V.
\end{equation*}
For surfaces, these subbundles are spanned by vector fields $X$, $H$, and $V$ respectively, normalized to satisfy the \emph{structure equations} $[X,V] = H$, $[H,V] = -X$, and $[X,H] = -KV$ where $K$ is the Gaussian curvature. Taking $\{X,H,V\}$ to be an orthonormal frame defines the \emph{Sasaki metric} on $SM$, which induces induces volume form $d\Sigma$ on $SM$ and volume form $dS_x$ on $S_xM$ -- see \cite{paternain2023geometric} for more details.

\subsection{Anosov manifolds}
\label{sec:anosov}
The theory in defining Guillarmou's operator $\Pi$ requires the dynamics of the geodesic flow to be sufficiently chaotic. In particular, we require $\varphi_t$ to be \emph{Anosov}, meaning there is a continuous and flow-invariant splitting of the tangent space
\begin{equation*}
	TSM = \R X \oplus E_s \oplus E_u
\end{equation*}
into the flow direction, the \emph{stable bundle}, and the \emph{unstable bundle} respectively such that given an arbitrary metric $|\cdot|$ on $SM$ there exists constants $C, \lambda > 0$ so that for all $t \geq 0$
\begin{align*}
	|d\varphi_t(v)| &\leq Ce^{-\lambda t}|v| \quad \text{for } v \in E_s,\\
	|d\varphi_{-t}(v)| &\leq Ce^{-\lambda t}|v| \quad \text{for } v \in E_u.
\end{align*}
A Riemannian manifold with Anosov geodesic flow is called an \emph{Anosov manifold}, which, for example, includes all Riemannian manifolds with strictly negative sectional curvature \cite{anosov1969geodesic}.
In applying microlocal tools, we are more interested in the dual splitting
\begin{equation*}
	T^*SM = E^*_0 \oplus E^*_s \oplus E^*_u
\end{equation*}
defined so that $E^*_0(E_s \oplus E_u) = 0$, $E^*_s(E_s \oplus \R X) = 0$, and $E^*_u(E_u \oplus \R X) = 0$.
The microlocal tools we will use are the \emph{wavefront set} which we denote by $\WF$ together with \emph{pseudo-differential operators} of order $m$ over $SM$ which we denote $\Psi^m(SM)$ and have corresponding \emph{symbol class} $S^m(T^*SM)$.


\subsection{Meromorphic extension of resolvents}
\label{sec:mer-ext}
\label{sec:pi}
We can decompose the operator $\Pi^{(z)}$ defined in (\ref{eq:Piz}) by
\begin{equation*}
	\Pi^{(z)}
	= \int_0^{\infty} e^{-tz} \varphi^*_{-t} dt
	+ \int_0^{\infty} e^{-tz} \varphi^*_{t} dt
	= R_+(z) + R_-(z).
\end{equation*}
where $R_+(z)$ and $R_-(z)$ denote the operators given by the left and right integrals respectively. A direct computation reveals $(z+X)R_+(z) = \id = (z-X)R_-(z)$, meaning these operators are the resolvents $R_{\pm}(z) = (z \pm X)^{-1}$. Thus the problem of defining $\Pi$ by  extending $\Pi^{(z)}$ to $z = 0$ as in (\ref{eq:Piz-Pi}) is the problem of extending the resolvents $R_{\pm}(z)$ to $0$. 

By the result of Faure--Sj\"ostrand \cite{faure2011upper}, there exist certain anisotropic Sobolev spaces $H^s_{\pm}$ over $SM$ so that the resolvent mappings $R_{\pm}(z): H_{\pm}^s \to H_{\pm}^s$  extend meromorphically to $0$ over these spaces for all $s > 0$.
The anisotropic Sobolev space $H^s_+$ (resp. $H^s_-$) is defined microlocally to have Sobolev regularity $H^s$ (resp. $H^{-s}$) in a conic neighborhood $V_s$ of $E_s^*$ and regularity $H^{-s}$ (resp. $H^s$) in a conic neighborhood $V_u$ of $E_u^*$. In fact, we may choose $V_{s/u} \supset E_{s/u}^*$ to be any disjoint conic neighborhoods \cite{lefeurve-microlocal-dynamics}.
It follows that for any pseudodifferential operator ${B \in \Psi^0(SM)}$ with $\WF(B) \subset V_{s}$, we find $B: H^s_+ \to H^s(SM)$ is bounded and in general, this construction provides the bounded inclusions ${H^{s}(SM) \inj H^{s}_{\pm} \inj H^{-s}(SM)}$. Dyatlov-Zworski \cite{dyatlov2016dynamical} studied the wavefront set of these resolvents, which is key input to Guillarmou's \cite{guillarmou2017invariant} analysis on the case of geodesic flows over an Anosov manifold. In this case, the extensions $R_{\pm}(z): H_{\pm}^s \to H_{\pm}^s$ have no poles for $\re z > 0$ and have Laurent expansion near $0$ given by
\begin{equation*}
	R_{\pm}(z) = R_{\pm}^{\hol}(z) + \f{\Pi^{\pm}}{z}
\end{equation*}
where the spectral projections $\Pi^{\pm} = \pm 1 \otimes 1$ are given by the distributional pairings $\ip{\Pi^{\pm}u}{v} = \pm\ip{u}{1}\ip{v}{1}$. That is, the resolvents have a simple pole at $0$ and if $\int_{SM} f d\Sigma = 0$, then $R_{\pm}(z)f$ extends \emph{holomorphically} to $0$. 
Guillarmou's operator $\Pi: H^{s}(SM) \to H^{-s}(SM)$ is defined by
\begin{equation}
\Pi \defeq R^{\hol}_+(0) + R^{\hol}_-(0)
\end{equation}
and has notable properties $X\Pi = 0$ and $\Pi 1 = 0$. Guillarmou's normal operator is $\Pi_0 \defeq \pi_{0*} \Pi \pi_0^*$, which is an elliptic pseudodifferential operator of order $-1$. See \cite{guillarmou2017invariant} for more details.

\subsection{The Poincar\'e disk} 
\label{sec:poincare-disk}
We denote by $\D = \{z \in \C : |z| < 1\}$ the \emph{Poincar\'e disk} with metric
\begin{equation*}
	g = \f{4|dz|^2}{(1-|z|^2)^2}
\end{equation*}
which has constant Gaussian curvature $-1$.
%
%
The isometry group of the Poinar\'e disk is the matrix group $G = SU(1,1)$ given by 
\[
	G = \l\{
	\begin{pmatrix}
		a & b\\
		\ol{b} & \ol{a}
	\end{pmatrix}
	:
	|a|^2-|b|^2 = 1
	\r\}
\]
where $G$ acts transitively and isometrically on the Poincar\'e disk by the action
\[
	\begin{pmatrix}
		a & b\\
		\ol{b} & \ol{a}
	\end{pmatrix}
	\cdot
	z
	= \f{az+b}{\ol{b}z+\ol{a}}.
\]
Let $K < G$ be the subgroup of $G$ that stabilizes the origin $0 \in \D$. Then if $X = G/K$ we obtain the diffeomorphism $X \cong \D$, so $\D$ is a homogeneous space, and for the proof of Theorem~\ref{thm:D-att-inv} we will need the following theory on homogeneous spaces -- more details can be found in \cite{helgason1980radon, helgason2022groups}. 
%
%
The group structure of homogeneous spaces allows for the following notion of convolution, which will provide useful representations for the key operators $S^{(z)}$ and $\Pi_0^{(z)}$ in the inversion formula. Given two functions $f_1, f_2$ over $\D$, their convolution is defined as
\begin{equation}
	(f_1 \times f_2)(g \cdot 0) 
	:= \int_G f_1(h \cdot 0)f_2(h^{-1}g \cdot 0)dh  
	= \int_G f_1(gh^{-1} \cdot 0)f_2(h \cdot 0)dh.
	\label{eq:conv}
\end{equation}
for some fixed $g \in G$. Here $dh$ is the Haar measure  of $G$ normalized so that for any $f \in C_c(\D)$ we have $\int_{G} f(h \cdot 0) dh = \int_{\D} f(x) d\vol_{\D}(x)$ where $d\vol_{\D}$ is the Riemannian volume form on the Poincar\'e disk. Note in our case $G = SU(1,1)$ is unimodular, which allows us to conflate the left and right Haar measures and obtain the equality in (\ref{eq:conv}); also note it is straightforward to verify this convolution is associative. Furthermore,  we call a function $f$ on $\D$ \emph{radial} if it is $K$-invariant, meaning $f(k \cdot x) = f(x)$ for all $k \in K$ and $x \in X$, and it is a theorem that this convolution is commutative when $f_1$ and $f_2$ are radial \cite[Corollary 5.2, Chapter II]{helgason2022groups}.
Using the definition of convolution and that the Laplace-Beltrami operator $\Delta$ is invariant under the isometry group $G$, we find $\Delta$ distributes over convolution by
\begin{equation}
	\Delta(f_1 \times f_2) = f_1 \times (\Delta f_2).
	\label{eq:delta-conv}
\end{equation}
Importantly, in the case both $f_1$ and $f_2$ are radial, we have commutativity of convolution and so we may distribute $\Delta$ and $L^{(z)}$ into the first factor.
%
%
We will be able to leverage the utility of radial functions by using the radial symmetrization operation
\begin{equation*}
f^{\natural}(x) = \int_{K}f(k \cdot x)dk.
\end{equation*}
In particular, we will be able to reduce our problem to radial functions by using
\begin{equation}
	(f_1 \times f_2)^{\natural} = f_1^{\natural} \times f_2.
	\label{eq:conv-nat}
\end{equation}
That is, the symmetrization operator distributes to the first convolution factor. Furthermore, by the isometry invariance of $\Delta$ and that $K$ stabilizes $0$, we know
\begin{equation}
	(\Delta f)^{\natural}(0) = (\Delta f^{\natural})(0).
	\label{eq:delta-nat}
\end{equation}
%
%
The key in verifying the inversion formula of the Theorem~\ref{thm:D-att-inv} is showing the spherical transform of each side agrees. To define this transform, first recall a \emph{spherical function} $\varphi$ on $\D$ is by definition a radial eigenfunction of the Laplace-Beltrami operator satisfying $\varphi(0) = 1$. We denote by $\varphi_{\lambda}$ the spherical function with eigenvalue $-(\lambda^2 + \f{1}{4})$ which has explicit integral representation
\begin{equation*}
	\varphi_{\lambda}(r) = \f{1}{\pi}\int_0^{\pi}(\cosh r - \sinh r \cos\theta)^{-i\lambda + 1/2}d\theta.
\end{equation*}
Then the \emph{spherical transform} of a radial function $f \in C_c(\D)$ is defined by
\begin{equation*}
	\til{f}(\lambda) = \int_{\D}f(x)\varphi_{-\lambda}(x)dx
\end{equation*}
for all $\lambda \in \C$ for which this integral exists. We need a few key properties of this transform. Firstly, this spherical transform is injective as there is a known inversion formula. Secondly, the spherical transform intertwines with the Laplacian by
\begin{equation}
	(\Delta f)^{\sim}(\lambda) = -\l(\lambda^2 + \f{1}{4}\r)\til{f}(\lambda)
	\label{eq:L-ST}
\end{equation}
and the spherical transform of a convolution is the product of the transforms:
\begin{equation}
	(f_1 \times f_2)^{\sim}(\lambda) = \til{f}_1(\lambda)\til{f}_2(\lambda).
	\label{eq:conv-ST}
\end{equation}
Finally, rephrasing \cite[Lemma 1.12, Chapter III]{helgason1980radon} yields
\begin{equation}
	2\pi\int_0^{\infty}e^{-zr}\varphi_{-\lambda}(r)dr
	= \pi\f{
	\Gamma(\f{z}{2} + \f{1}{4} + \f{\lambda}{2}i)\Gamma(\f{z}{2} + \f{1}{4} - \f{\lambda}{2}i)}
	{\Gamma(\f{z}{2} + \f{3}{4} + \f{\lambda}{2}i)\Gamma(\f{z}{2} + \f{3}{4} - \f{\lambda}{2}i)}.
	\label{eq:gamma-int}
\end{equation}

\section{Attenuated inversion on the Poincar\'e disk}
In this section we prove the attenuated inversion formula on the Poincar\'e disk $\D$ described in Theorem~\ref{thm:D-att-inv} by generalizing the argument for an unattenuated X-ray inversion formula given in \cite[Theorem 1.14, Chapter III]{helgason1980radon}. This argument requires the preliminaries and notation of Section~\ref{sec:poincare-disk} on the Poincar\'e disk. First we write
$\Pi_0^{(z)}f$ and $S^{(z)}f$ as a convolution.
\begin{lemma}
\label{lem:op-conv}
	We have $\Pi_0^{(z)}f = f \times 2\tau^{(z)}$ and $S^{(z)}f = f \times \sigma^{(z)}$
	with kernels
	\begin{align*}
	\tau^{(z)}(x) &= e^{-zd(0,x)}\sinh d(0,x)^{-1}\\
	\text{and} \quad
	\sigma^{(z)}(x) &= e^{-(z+1)d(0,x)}\sinh d(0,x)^{-1}
	= e^{-zd(0,x)}(\coth d(0,x) - 1).
	\end{align*}
\end{lemma}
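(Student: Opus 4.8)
The plan is to compute both operators directly from their integral definitions and recognize the result as a convolution against a radial kernel. Starting from the formula $\Pi_0^{(z)}f(x) = \int_{S_xM}\int_{\R}e^{-z|t|}f(\gamma_{x,v}(t))\,dt\,dS_x(v)$ in~(\ref{eq:Piz-int}), I would first fold the two halves $t \geq 0$ and $t \leq 0$ together: since reversing $v$ to $-v$ in the fiber integral sends $\gamma_{x,v}(t)$ to $\gamma_{x,-v}(-t)$, the integral over $\R$ becomes $2\int_0^\infty e^{-zt} f(\gamma_{x,v}(t))\,dt$ after averaging over the sphere (or equivalently just doubling the $S^{(z)}$-type integrand with exponent $z$ in place of $z+1$; this is precisely the relation $S_K^{(z)} = \tfrac12\Pi_0^{(z+\sqrt{-K})}$ noted at the end of the introduction, so it suffices to treat one of them). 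So the core computation is to show $\int_{S_xM}\int_0^\infty e^{-(z+\alpha)t} f(\gamma_{x,v}(t))\,dt\,dS_x(v)$ equals $f \times \kappa$ for the appropriate radial kernel $\kappa$, and then specialize $\alpha = 0$ (for $\Pi_0^{(z)}$, with an extra factor $2$) and $\alpha = 1$ (for $S^{(z)}$).

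Next I would rewrite this phase-space integral as an integral over $\D$ in geodesic polar coordinates centered at $x$. The exponential map based at $x$ identifies $S_xM \times (0,\infty)$ with $\D \setminus \{x\}$ via $(v,t) \mapsto \gamma_{x,v}(t)$, and in these coordinates the hyperbolic volume form is $d\vol_\D(y) = \sinh(r)\,dr\,dS_x(v)$ where $r = d(x,y)$ — this is the standard constant-curvature $-1$ Jacobian. Hence $\int_{S_xM}\int_0^\infty e^{-(z+\alpha)t} f(\gamma_{x,v}(t))\,dt\,dS_x(v) = \int_{\D} f(y)\, e^{-(z+\alpha)d(x,y)}\sinh(d(x,y))^{-1}\,d\vol_\D(y)$. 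Setting $\alpha = 0$ gives the kernel $\tau^{(z)}(x,y) = e^{-zd(x,y)}\sinh(d(x,y))^{-1}$ and the prefactor $2$; setting $\alpha = 1$ gives $\sigma^{(z)}(x,y) = e^{-(z+1)d(x,y)}\sinh(d(x,y))^{-1}$, and the alternative expression $e^{-zd}(\coth d - 1)$ is immediate from $e^{-d}/\sinh d = (\cosh d - \sinh d)/\sinh d = \coth d - 1$.

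Finally I would identify this integral against a function of the geodesic distance with the homogeneous-space convolution of~(\ref{eq:conv}). Because $d(x,y)$ depends only on the $G$-orbit of the pair, the kernel $\tau^{(z)}(x,y)$ (resp.\ $\sigma^{(z)}$) equals $\tau^{(z)}(h^{-1}\cdot y)$ when $x = h\cdot 0$, where I abuse notation and write $\tau^{(z)}(y) = \tau^{(z)}(0,y) = e^{-zd(0,y)}\sinh(d(0,y))^{-1}$, a radial function on $\D$. Converting the $\D$-integral to a $G$-integral via the normalization $\int_G F(h\cdot 0)\,dh = \int_\D F\,d\vol_\D$ and the change of variables $y = h\cdot 0$ then matches the definition~(\ref{eq:conv}) of $f \times \tau^{(z)}$ exactly; similarly for $S^{(z)}f = f \times \sigma^{(z)}$.

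The only real subtlety is integrability near the diagonal: the kernels $\tau^{(z)}$ and $\sigma^{(z)}$ blow up like $1/r$ as $r = d(0,x) \to 0$, which is integrable in two dimensions against the area form $\sinh(r)\,dr\,dS_x \sim r\,dr\,dS_x$, so the convolution is well-defined for $f \in C_c^\infty(\D)$; one should also check the decay $e^{-(z+\alpha)r}\sinh(r)^{-1} = O(e^{-(\re z + \alpha - 1)r})$ at infinity is harmless since $f$ has compact support. I expect the identification of the polar-coordinate integral with the group convolution to be the step demanding the most care, as it requires unwinding the measure normalization conventions from Section~\ref{sec:poincare-disk}; the geometry itself is routine.
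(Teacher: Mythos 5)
Your proposal is correct and follows essentially the same route as the paper's proof: both rewrite the phase-space integral via the substitution $y = \gamma_{x,v}(t)$ with Jacobian $\sinh d(x,y)$, identify the resulting distance-kernel integral with the homogeneous-space convolution through the Haar measure normalization and radiality of the kernel, and obtain $S^{(z)}$ from the relation $S^{(z)} = \tfrac{1}{2}\Pi_0^{(z+1)}$. The only difference is cosmetic (you go from the geometric integral to the convolution, the paper unwinds the convolution first), and your remarks on integrability near the diagonal are a welcome but inessential addition.
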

\begin{proof}
First note that if $f_2$ is radial (as $\tau^{(z)}$ and $\sigma^{(z)}$ are), we can use the normalization of the convolution and that $K$ stabilizes $0$ to write
\begin{align*}
	(f_1 \times f_2)(g \cdot 0)
	= \int_G f_1(h \cdot 0)f_2(h^{-1}g \cdot 0)dh
	= \int_{\D} f_1(a \cdot 0)f_2(a^{-1}g) d\vol_{\D}(aK).
\end{align*}
Thus if we write $f_2(x) = F_2(d(0,x))$, we can express the convolution as
\begin{align*}
	(f_1 \times f_2)(x)
	= \int_{\D} f_1(x)F_2(d(x,y)) d\vol_{\D}(y).
\end{align*}
With this, use the substitution $y = \gamma_{x,v}(t)$ to rewrite the operator $\Pi_0^{(z)}$ as
\begin{align*}
	\Pi_0^{(z)}f(x) &= 2\int_{S_x\D}\int_{0}^{\infty} e^{-zt}f(\gamma_{x,v}(t))dtdS_x(v)\\
	&= 2\int_{\D}e^{-zd(x,y)}\sinh d(x,y)^{-1} f(y)dy
	= f \ast 2\tau^{(z)}
\end{align*}
where we used $dy = \sinh d(x,y)dtdS_x(v)$ on the Poincar\'e disk. The expression for $S^{(z)}f$ follows from $S^{(z)} = \f{1}{2}\Pi_0^{(z+1)}$.
\end{proof}

This lemma provides access to the theory of homogeneous spaces. For example, it is an immediate consequence of Lemma~\ref{lem:op-conv} and (\ref{eq:conv-nat}) that
\begin{equation}
	(\Pi_0^{(z)}f)^{\natural} = \Pi_0^{(z)}f^{\natural}
	\quad \text{and} \quad
	(S^{(z)}f)^{\natural} = S^{(z)}f^{\natural}.
	\label{eq:S-Pi-nat}
\end{equation}
We additionally note it is an immediate consequence of (\ref{eq:delta-conv}) that the operator $L^{(z)} = \Delta - z(z+1)$ also distributes over convolution by
\begin{equation}
	L^{(z)}(f_1 \times f_2) = f_1 \times (L^{(z)} f_2)
	\label{eq:Lz-conv}
\end{equation}
and it is a consequence of (\ref{eq:delta-nat}) that
\begin{equation}
	(L^{(z)}f)^{\natural}(0) = (L^{(z)}f^{\natural})(0).
	\label{eq:Lz-nat}
\end{equation}

\begin{proof}[Proof of Theorem~\ref{thm:D-att-inv}]
	It suffices to prove the inversion formula at the origin $0 \in \D$ as all operators involved ($L^{(z)}$, $S^{(z)}$, $\Pi_0^{(z)}$) are invariant under the isometry group $G$. To reduce the problem to radial functions $f$, we use (\ref{eq:Lz-nat}) to rewrite the left side of the inversion formula as
	\begin{align*}
		(L^{(z)}S^{(z)}\Pi_{0}^{(z)}f)(0)
		= (L^{(z)}(S^{(z)}\Pi_0^{(z)}f)^{\natural})(0)
		= (L^{(z)}S^{(z)}\Pi_0^{(z)}f^{\natural})(0).
	\end{align*}
	Note we used (\ref{eq:S-Pi-nat}) to commute the radial symmetrization with the operators in the last equality. This equation together with the observation $-8\pi^2 f^{\natural}(0) = -8\pi^2 f(0)$ implies it suffices to prove the theorem for the radial symmetrization of $f$. That is, we can assume without loss of generality that $f$ is radial and so it suffices to verify
	\begin{equation*}
		L^{(z)}S^{(z)}\Pi_0^{(z)}f(0) = -8\pi^2f(0)
		\quad \text{for } f \in C_c^{\infty}(\D) \text{ radial}.
	\end{equation*}
	We prove this by taking a spherical transform of both sides and ensuring these transforms agree. Using (\ref{eq:Lz-conv}) with convolution is commutative for radial functions, followed by (\ref{eq:conv-ST}), we compute the spherical transform of the left side to be
	\begin{align*}
		(L^{(z)}S^{(z)}\Pi_0^{(z)}f)^{\sim}(\lambda)
		&= 2(L^{(z)}(f \times \tau^{(z)} \times \sigma^{(z)}))^{\sim}(\lambda)\\
		&= 2(L^{(z)}f \times \tau^{(z)} \times \sigma^{(z)})^{\sim}(\lambda)
		= 2(L^{(z)}f)^{\sim}(\lambda)\til{\tau}^{(z)}(\lambda)\til{\sigma}^{(z)}(\lambda).
	\end{align*}
	We are done if we can reduce the above expression to $-8\pi^2\til{f}(\lambda)$. First use $L^{(z)} = \Delta - z(z+1)$ and (\ref{eq:L-ST}) to write
	\begin{align*}
		(L^{(z)})^{\sim}(\lambda)
		&= (\Delta f)^{\sim}(\lambda) - z(z+1)\til{f}(\lambda)\\
		&= \l(-\l(\lambda^2+\f{1}{4}\r)-z(z+1)\r)\til{f}(\lambda)
		= -\l(\l(z+\f{1}{2}\r)^2 + \lambda^2\r)\til{f}(\lambda).
	\end{align*}
	Thus it only remains to compute the spherical transforms of $\tau^{(z)}$ and $\sigma^{(z)}$ and show 
	\begin{equation}
		\til{\tau}^{(z)}(\lambda)\til{\sigma}^{(z)}(\lambda)
		 = \f{4\pi^2}{\l(z+\f{1}{2}\r)^2 + \lambda^2}.
		\label{eq:sigma-times-tau}
	\end{equation}
	Rewriting the spherical transform integral of $\tau^{(z)}$ in polar coordinates yields
	\begin{align*}
	\til{\tau}^{(z)}(\lambda) 
	&= \int_{\D}e^{-zd(0,x)}\sinh d(0,x)^{-1} \varphi_{-\lambda}(d(0,x))dx\\
	&= \int_{0}^{2\pi}\int_{0}^{\infty}e^{-zr}(\sinh r)^{-1}\varphi_{-\lambda}(r)\sinh r drd\theta
	= 2\pi \int_{0}^{\infty}e^{-zr}\varphi_{-\lambda}(r)dr.
	\end{align*}
	The above integral is precisely (\ref{eq:gamma-int}) and so we have computed
	\begin{equation}
	\til{\tau}^{(z)}(\lambda)		
	= \pi\f{
	\Gamma(\f{z}{2} + \f{1}{4} + \f{\lambda}{2}i)\Gamma(\f{z}{2} + \f{1}{4} - \f{\lambda}{2}i)}
	{\Gamma(\f{z}{2} + \f{3}{4} + \f{\lambda}{2}i)\Gamma(\f{z}{2} + \f{3}{4} - \f{\lambda}{2}i)}.
	\label{eq:tau-ST}
	\end{equation}
	Next we compute the spherical transform $\til{\sigma}^{(z)}(\lambda)$, which is precisely designed to cancel the above gamma functions. Indeed, by substituting polar coordinates in the same way as above we find
	\begin{align*}
		\til{\sigma}^{(z)}(\lambda)
		= 2\pi \int_{0}^{\infty}e^{-(z+1)r}\varphi_{-\lambda}(r)dr.
	\end{align*}
	Evaluating this integral using (\ref{eq:gamma-int}), then applying $\Gamma(w+1) = w\Gamma(w)$ to both factors in the denominator gives
	\begin{equation}
	\til{\sigma}^{(z)}(\lambda) 
	= \f{\pi}{\l(\f{z}{2}+\f{1}{4}\r)^2 + \l(\f{\lambda}{2}\r)^2}
	\cdot 
	\f{\Gamma(\f{z}{2} + \f{3}{4} + \f{\lambda}{2}i)\Gamma(\f{z}{2} + \f{3}{4} - \f{\lambda}{2}i)}
	{\Gamma(\f{z}{2} + \f{1}{4} + \f{\lambda}{2}i)\Gamma(\f{z}{2} + \f{1}{4} - \f{\lambda}{2}i)}.
	\label{eq:sigma-ST}
	\end{equation}
	We then obtain (\ref{eq:sigma-times-tau}) by multiplying (\ref{eq:tau-ST}) with (\ref{eq:sigma-ST}).
\end{proof}

\section{Attenuated inversion on constant curvature Anosov surfaces}

In this section we prove Theorem~\ref{thm:M-att-inv}, which is the attenuated inversion formula on a connected closed surface $M$ with a metric $g$ of constant negative curvature. We would like to build upon Theorem~\ref{thm:D-att-inv}, which assumes constant Gaussian curvature $K = -1$, so we must apply the following normalization process.

\begin{lemma}
Let $(M,g)$ be a surface of constant curvature $K < 0$, let $\til{g} = -K g$ be the rescaled metric with constant curvature $-1$, and let $\Pi^{(z)}_0$, $S_K^{(z)}$, $L_K^{(z)}$ and $\til{\Pi}^{(z)}_0$, $\til{S}_{-1}^{(z)}$, $\til{L}_{-1}^{(z)}$ be the operators defined with metrics $g$ and $\til{g}$ respectively. Then,
\begin{equation*}
	L_K^{(z)}S_K^{(z)}\Pi_0^{(z)} = \til{L}_{-1}^{(z/\sqrt{-K})}\til{S}_{-1}^{(z/\sqrt{-K})}\til{\Pi}_0^{(z/\sqrt{-K})}.
\end{equation*}
\label{lem:norm}
\end{lemma}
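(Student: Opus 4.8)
The plan is to track how each of the three operators $\Pi_0^{(z)}$, $S_K^{(z)}$, and $L_K^{(z)}$ transforms under the conformal rescaling $\tilde g = -Kg$, and then check the composite scalings cancel. The governing observation is that distances scale by $d_{\tilde g} = \sqrt{-K}\, d_g$, unit-speed geodesics reparametrize as $\tilde\gamma_{x,v}(s) = \gamma_{x,v}(s/\sqrt{-K})$ (the unit tangent bundle $SM$ is the \emph{same} set, since $|v|_{\tilde g} = \sqrt{-K}|v|_g$ means $S_g M$ and $S_{\tilde g}M$ differ by a fiberwise dilation that is absorbed in the identification), and the fiber measure $dS_x$ and Laplacian scale by known powers of $-K$. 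I would carry this out one operator at a time.

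First, for $\Pi_0^{(z)}$: starting from the integral representation (\ref{eq:Piz-int}), substitute $t = s/\sqrt{-K}$ in the inner integral. The geodesic $\gamma_{x,v}(t)$ for the metric $g$ becomes, after this substitution, the unit-speed geodesic $\tilde\gamma_{x,v}(s)$ for $\tilde g$; the exponent $-z|t|$ becomes $-(z/\sqrt{-K})|s|$; the $dt$ contributes a factor $1/\sqrt{-K}$; and $dS_x$ (the fiber measure on a circle) is dimension-$1$ so it contributes a compensating factor — one must check the precise power, but since $S_xM$ for a surface is one-dimensional, $dS_{x,g} = $ const $\cdot\, dS_{x,\tilde g}$ with the constant a power of $\sqrt{-K}$. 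The net effect is $\Pi_0^{(z)} = c_1(K)\,\tilde\Pi_0^{(z/\sqrt{-K})}$ for an explicit constant $c_1(K)$. The identical computation, via the relation $S_K^{(z)} = \tfrac12\Pi_0^{(z+\sqrt{-K})}$ recorded at the end of Section 1 (and its rescaled analogue $\tilde S_{-1}^{(w)} = \tfrac12\tilde\Pi_0^{(w+1)}$, noting $(z+\sqrt{-K})/\sqrt{-K} = z/\sqrt{-K} + 1$), gives $S_K^{(z)} = c_2(K)\,\tilde S_{-1}^{(z/\sqrt{-K})}$. For $L_K^{(z)} = \Delta_g - z(z+\sqrt{-K})$: under $\tilde g = -Kg$ in two dimensions $\Delta_{\tilde g} = (-K)^{-1}\Delta_g$, and $z(z+\sqrt{-K}) = (-K)\cdot (z/\sqrt{-K})(z/\sqrt{-K} + 1)$, so $L_K^{(z)} = (-K)\,\tilde L_{-1}^{(z/\sqrt{-K})}$, i.e.\ $c_3(K) = -K$.

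Finally I would multiply the three scalars and verify $c_1(K)\,c_2(K)\,c_3(K) = 1$, which is forced — the cleanest way to see this without chasing the individual powers of $\sqrt{-K}$ is dimensional analysis: the left side $L_K^{(z)}S_K^{(z)}\Pi_0^{(z)}$ must, by Theorem~\ref{thm:M-att-inv} (or rather by what we are proving it to be a step toward), have a $K$-independent value $-8\pi^2$, so the product of scalings is necessarily $1$; but to keep the argument self-contained I would instead just record the powers of $-K$ from each of the three computations and add the exponents. The main obstacle is purely bookkeeping: getting the power of $\sqrt{-K}$ in the fiber-measure rescaling $dS_{x,g}\leftrightarrow dS_{x,\tilde g}$ right, and making sure the reparametrization of geodesics is applied consistently (the same substitution $t\mapsto t/\sqrt{-K}$ must appear in $\Pi_0^{(z)}$ and in $S_K^{(z)}$). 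Once those scalings are pinned down the cancellation is immediate. I expect no genuine analytic difficulty here — it is entirely a change-of-variables computation — so the write-up should be short.
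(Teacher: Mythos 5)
Your plan is essentially the paper's own proof: it rescales each operator under $\tilde g=-Kg$ via the substitution $t\mapsto t/\sqrt{-K}$ in the integral representation of $\Pi_0^{(z)}$, handles $S_K^{(z)}$ through the identity $S_K^{(z)}=\tfrac12\Pi_0^{(z+\sqrt{-K})}$, uses $\Delta_g=-K\Delta_{\tilde g}$, and multiplies the resulting constants. The bookkeeping you flag resolves as expected --- the fiber measure $dS_x$ is the angle measure of total mass $2\pi$ for both metrics, so it contributes no Jacobian and one gets $c_1=c_2=1/\sqrt{-K}$, $c_3=-K$, whose product is $1$ --- but do carry out this direct check rather than the shortcut through Theorem~\ref{thm:M-att-inv}, which would be circular since that theorem is proved using this lemma.
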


\begin{proof}
	We relate the operators defined using $g$ to the corresponding operators using $\til{g}$, which is essentially an exercise in changing units. Consider a test function $f \in C_c^{\infty}(M)$ and use (\ref{eq:Piz-int}) to write 
	\begin{align*}
		\Pi_0^{(z)}f 
		= \int_{S_xM}\int_{\R} e^{-z|t|}f(\gamma_{x,v/\sqrt{-K}}(\sqrt{-K}t))dtdS_x(v).
	\end{align*}
	Then making the change of variables $\sqrt{-K}t \mapsto t$ and letting $\til{S}M$ be the unit sphere bundle with respect to $\til{g}$, we can write
	\begin{align*}
		\Pi_0^{(z)}f 
		= \f{1}{\sqrt{-K}} \int_{\til{S}_xM}\int_{\R} e^{-z/\sqrt{-K} \cdot |t|}f(\gamma_{x,\til{v}}(t))dtd\til{S}_x(\til{v})
		= \f{1}{\sqrt{-K}}\til{\Pi}_0^{(z/\sqrt{-K})}f.
	\end{align*}
	We can repeat the same argument for $S_K^{(z)}$, or simply observe
	\begin{align*}
		S_K^{(z)} 
		= \f{1}{2}\Pi_0^{(z+\sqrt{-K})}
		= \f{1}{2\sqrt{-K}} \til{\Pi}_0^{(z/\sqrt{-K}+1)}
		= \f{1}{\sqrt{-K}}\til{\Pi}_0^{(z/\sqrt{-K}+1)}
		= \f{1}{\sqrt{-K}}\til{S}_{-1}^{(z/\sqrt{-K})}.
	\end{align*}
	Next let $\Delta$ and $\til{\Delta}$ be the Laplace-Beltrami operators for $g$ and $\til{g}$ respectively, note $\Delta = -K\til{\Delta}$, then compute
	\begin{align*}
		L_K^{(z)}
		= \Delta - z(z+\sqrt{-K})
		= -K\l(\til{\Delta} - \f{z}{\sqrt{-K}}\l(\f{z}{\sqrt{-K}}+1\r)\r)
		= -K\til{L}_{-1}^{(z/\sqrt{-K})}.
	\end{align*}
	Therefore, using all the above operator relations, we conclude
	\begin{align*}
	L_K^{(z)}S_K^{(z)}\Pi_0^{(z)}
	&= -K\til{L}_{-1}^{(z/\sqrt{-K})}\f{1}{\sqrt{-K}}\til{S}_{-1}^{(z/\sqrt{-K})}\f{1}{\sqrt{-K}}\til{\Pi}_0^{(z/\sqrt{-K})}.\\
	\end{align*}	
\end{proof}

To prove Theorem~\ref{thm:M-att-inv} we will consider the Riemannian covering map $\pi: \D \to M$,
which we will use to carry over the inversion formula on the Poincar\'e disk given in Theorem~\ref{thm:D-att-inv} to the case of closed manifolds. Crucially, the operators in the inversion formula commute with such covering maps.

\begin{lemma} Let $\pi: \til{M} \to M$ be a Riemannian covering map between arbitrary Riemannian manifolds. Then for $\re(z) > 0$,
	\begin{equation}
		\pi^{*}\Pi_0^{(z)} = \Pi_0^{(z)}\pi^{*}.
		\label{eq:pi-op-comm}
	\end{equation}
	\label{lem:pi-op-comm}
\end{lemma}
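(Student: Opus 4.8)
The plan is to prove the identity $\pi^*\Pi_0^{(z)} = \Pi_0^{(z)}\pi^*$ by unwinding the integral definition \eqref{eq:Piz-int} of $\Pi_0^{(z)}$ on each side and using the key compatibility property of a Riemannian covering: geodesics and the geodesic flow downstairs lift to geodesics and the geodesic flow upstairs. More precisely, a Riemannian covering $\pi:\til M\to M$ is a local isometry, so $d\pi$ restricts to a diffeomorphism $S_x\til M\to S_{\pi(x)}M$ for each $x\in\til M$, and it intertwines the geodesic flows: $\pi(\gamma_{x,v}(t)) = \gamma_{\pi(x),d\pi(v)}(t)$ for all $t\in\R$. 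The measure $dS_x$ on the fiber is defined via the Sasaki metric, which is itself built from $g$, so the fiberwise map $d\pi_x : S_x\til M\to S_{\pi(x)}M$ is measure-preserving.

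First I would fix $x\in\til M$ and a test function $f\in C^\infty(M)$ (the identity being one of operators, it suffices to check it pointwise on functions on $M$, since $\pi^*$ precedes or follows $\Pi_0^{(z)}$). Computing the left-hand side at $x$ gives
\begin{equation*}
	(\pi^*\Pi_0^{(z)}f)(x) = (\Pi_0^{(z)}f)(\pi(x)) = \int_{S_{\pi(x)}M}\int_{\R} e^{-z|t|} f\bigl(\gamma_{\pi(x),w}(t)\bigr)\,dt\,dS_{\pi(x)}(w).
\end{equation*}
For the right-hand side,
\begin{equation*}
	(\Pi_0^{(z)}\pi^*f)(x) = \int_{S_x\til M}\int_{\R} e^{-z|t|} (\pi^*f)\bigl(\gamma_{x,v}(t)\bigr)\,dt\,dS_x(v) = \int_{S_x\til M}\int_{\R} e^{-z|t|} f\bigl(\pi(\gamma_{x,v}(t))\bigr)\,dt\,dS_x(v).
\end{equation*}
Then I would substitute $\pi(\gamma_{x,v}(t)) = \gamma_{\pi(x),d\pi_x(v)}(t)$ in the last integrand and change variables $w = d\pi_x(v)$ in the fiber integral; since $d\pi_x: S_x\til M\to S_{\pi(x)}M$ is a measure-preserving diffeomorphism, the two expressions coincide. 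Absolute convergence of the $t$-integral (needed to justify Fubini and the change of variables) is immediate from $\re(z)>0$ and boundedness of $f$ if $M$ is compact; on a general $M$ one restricts to $f$ for which $\Pi_0^{(z)}f$ is defined, or notes the integrand is dominated by $\|f\|_\infty e^{-\re(z)|t|}$ locally.

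The only genuinely non-routine point is the claim that $d\pi_x$ is measure-preserving from $(S_x\til M, dS_x)$ to $(S_{\pi(x)}M, dS_{\pi(x)})$; but this follows because $\pi$ is a local isometry, hence the Sasaki metrics on $S\til M$ and $SM$ correspond under the induced map $S\til M\to SM$, and $dS_x$ is the Riemannian measure on the fiber induced by the Sasaki metric, which is therefore preserved. I expect no serious obstacle — the lemma is essentially the statement that $\Pi_0^{(z)}$ is built out of the geodesic flow and fiber integration, both of which are local-isometry-natural, so it commutes with any local isometry and in particular with a covering projection. The main thing to be careful about is bookkeeping: making sure the identity is interpreted correctly as an equality of operators $C^\infty(M)\to C^\infty(\til M)$ (so that $\pi^*$ on the left is post-composition and $\pi^*$ on the right is pre-composition), and that convergence of the defining integrals holds in whatever function class is relevant.
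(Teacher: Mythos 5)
Your proposal is correct and follows essentially the same route as the paper: both unwind the integral formula \eqref{eq:Piz-int}, use that a Riemannian covering is a local isometry so $\pi\circ\gamma_{x,v}=\gamma_{\pi(x),d\pi_x(v)}$, and identify the fibers $S_x\til{M}\cong S_{\pi(x)}M$ via $d\pi_x$. The only difference is that you make explicit the measure-preservation of $d\pi_x$ in the fiber change of variables, a point the paper's proof leaves implicit by writing both integrals over the fiber upstairs.
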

\begin{proof}
	Take arbitrary $f \in C_c^{\infty}(M)$ to test the claimed operator equality. The key is $\Pi^{(z)}$ is defined by integrating over geodesics and the projection of a geodesic under a Riemannian covering map remains a geodesic. Indeed, fix any $x \in \til{M}$ and additionally consider some $v \in S_x\til{M}$. Then the projection $\pi \circ \gamma_{x,v}$ must be a geodesic by $\pi$ a local isometry, but this geodesic must be $\gamma_{\pi(x), D_x\pi(v)}$ because both these geodesics have initial position $\pi(x)$ and initial velocity $D_x\pi(v)$.
Then by $\pi \circ \gamma_{x,v} = \gamma_{\pi(x), D_x\pi(v)}$ we get the relation
	\begin{align*}
		f \circ \gamma_{\pi(x), D\pi(v)}
		= f \circ \pi \circ \gamma_{x,v}
		= \pi^{*}f \circ \gamma_{x,v}.
	\end{align*}
This allows us to compute
	\begin{align*}
		(\Pi_0^{(z)}f)(\pi(x))
		&= \int_{S_xM}\int_{\R}e^{-|t|z}f(\gamma_{\pi(x), D\pi(v)}(t))dtdS_x(v)\\
		&= \int_{S_xM}\int_{\R}e^{-|t|z}\pi^{*}f(\gamma_{x,v}(t))dtdS_x(v)
		= (\Pi_0^{(z)}\pi^{*}f)(x).
	\end{align*}
\end{proof}

\begin{cor}
	Let $\pi: \til{M} \to M$ be a Riemannian covering map between arbitrary Riemannian manifolds. Then for $K < 0$ and $\re z \geq 0$,
	\begin{equation}
		\pi^{*}S_K^{(z)} = S_K^{(z)}\pi^{*}.
	\end{equation}
\end{cor}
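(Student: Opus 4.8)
The plan is to deduce this corollary directly from Lemma~\ref{lem:pi-op-comm} using the relation $S_K^{(z)} = \f{1}{2}\Pi_0^{(z+\sqrt{-K})}$ established at the end of the introduction. First I would observe that since $\re z \geq 0$ and $\sqrt{-K} > 0$, we have $\re(z + \sqrt{-K}) > 0$, so Lemma~\ref{lem:pi-op-comm} applies with the attenuation parameter $z + \sqrt{-K}$ in place of $z$. This gives $\pi^* \Pi_0^{(z+\sqrt{-K})} = \Pi_0^{(z+\sqrt{-K})} \pi^*$, and multiplying both sides by $\f{1}{2}$ and substituting $S_K^{(z)} = \f{1}{2}\Pi_0^{(z+\sqrt{-K})}$ yields $\pi^* S_K^{(z)} = S_K^{(z)} \pi^*$.

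The only subtlety worth flagging is the boundary case $\re z = 0$: the corollary claims the identity for $\re z \geq 0$, whereas Lemma~\ref{lem:pi-op-comm} is stated only for $\re z > 0$. But this is precisely where shifting the parameter by $\sqrt{-K}$ saves us — even when $\re z = 0$, the shifted parameter $z + \sqrt{-K}$ still has strictly positive real part, so the hypothesis of Lemma~\ref{lem:pi-op-comm} is satisfied. Thus no separate limiting argument is needed, and there is no genuine obstacle here; the statement is a one-line consequence once the parameter bookkeeping is made explicit.

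\begin{proof}
	Recall from the end of Section~\ref{sec:pre} (see the discussion following (\ref{eq:Piz-int})) the relation $S_K^{(z)} = \f{1}{2}\Pi_0^{(z+\sqrt{-K})}$. For $\re z \geq 0$ and $K < 0$ we have $\re(z+\sqrt{-K}) = \re z + \sqrt{-K} > 0$, so Lemma~\ref{lem:pi-op-comm} applies with attenuation parameter $z + \sqrt{-K}$, giving
	\begin{equation*}
		\pi^{*}\Pi_0^{(z+\sqrt{-K})} = \Pi_0^{(z+\sqrt{-K})}\pi^{*}.
	\end{equation*}
	Multiplying both sides by $\f{1}{2}$ and using $S_K^{(z)} = \f{1}{2}\Pi_0^{(z+\sqrt{-K})}$ yields $\pi^{*}S_K^{(z)} = S_K^{(z)}\pi^{*}$.
\end{proof}
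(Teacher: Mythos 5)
Your proposal is correct and matches the paper's own proof, which likewise deduces the corollary in one line from Lemma~\ref{lem:pi-op-comm} via the relation $S_K^{(z)} = \tfrac{1}{2}\Pi_0^{(z+\sqrt{-K})}$. Your explicit remark that the shift by $\sqrt{-K} > 0$ handles the boundary case $\re z = 0$ is a welcome clarification of the same argument, not a different route.
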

\begin{proof}
	Using $S_K^{(z)} = \f{1}{2}\Pi_0^{(z+\sqrt{-K})}$, this follows directly from Lemma~\ref{lem:pi-op-comm}.
\end{proof}

Note the pullback of a smooth function into the Poincar\'e disk will almost certainly not be compactly supported, so to use Theorem~\ref{thm:D-att-inv} we will need the following lemma to control the effect of the $\Pi_0^{(z)}$ operator far from the origin on bounded functions.
\begin{lemma} Let $f \in C^{\infty}(\D)$ be uniformly bounded with $|f(y)| \leq C$ for all $y \in \D$ and fix $x \in \D$.
\begin{enumerate}[(a)]
\item Then $\Pi^{(z)}_0 f$ remains uniformly bounded, and $|\Pi_0 f (y)| \leq \f{4\pi}{\re(z)}C$ for all $y \in \D$.
\label{it:Pi0-bounded}
\item If $f(y) = 0$ for $d(x,y) < R$, then
	\begin{align*}
		|\Pi^{(z)}_0 f(x)| \leq \f{4\pi}{\re(z)}Ce^{-R\re(z)}.
	\end{align*}
	\label{it:C-0}
\item If $|f(y)| \leq \eps$ for all $y \in \D$ with $d(x,y) < R$, then
	\begin{align*}
		|\Pi^{(z)}_0 f(x)| \leq \f{4\pi}{\re(z)}(\eps + Ce^{-R\re(z)}).
	\end{align*}
	\label{it:C-eps}
\end{enumerate}
\label{lem:eps}
\end{lemma}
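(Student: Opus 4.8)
The plan is to estimate the defining integral $\Pi_0^{(z)}f(x) = \int_{S_xM}\int_{\R} e^{-|t|z} f(\gamma_{x,v}(t))\,dt\,dS_x(v)$ directly, using $|e^{-|t|z}| = e^{-|t|\re(z)}$ and $\int_{\R} e^{-|t|\re(z)}\,dt = 2/\re(z)$, together with $\int_{S_xM} dS_x(v) = 2\pi$ on the Poincar\'e disk.

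For part (\ref{it:Pi0-bounded}), I would simply bound $|f(\gamma_{x,v}(t))| \leq C$ pointwise and pull the constant out: $|\Pi_0^{(z)}f(x)| \leq C \int_{S_xM}\int_{\R} e^{-|t|\re(z)}\,dt\,dS_x(v) = C \cdot 2\pi \cdot \tfrac{2}{\re(z)} = \tfrac{4\pi}{\re(z)}C$, and since $x$ was arbitrary this is a uniform bound. (The statement writes $\Pi_0 f$ with a typo for $\Pi_0^{(z)}f$.) For part (\ref{it:C-0}), the point is that the geodesic $\gamma_{x,v}$ starts at $x$ and is unit-speed, so $d(x,\gamma_{x,v}(t)) \leq |t|$; hence $f(\gamma_{x,v}(t)) = 0$ whenever $|t| < R$, and the inner $t$-integral is restricted to $|t| \geq R$, giving $\int_{|t|\geq R} e^{-|t|\re(z)}\,dt = \tfrac{2}{\re(z)} e^{-R\re(z)}$; multiplying by $C$ and the $2\pi$ from the sphere integral yields the claim. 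For part (\ref{it:C-eps}), I would split the $t$-integral into $|t| < R$ and $|t| \geq R$: on the first region $d(x,\gamma_{x,v}(t)) < R$ (by the same unit-speed bound) so $|f| \leq \eps$ there, contributing at most $\eps \cdot \tfrac{2}{\re(z)}$; on the second region bound $|f| \leq C$, contributing at most $C e^{-R\re(z)} \cdot \tfrac{2}{\re(z)}$ exactly as in part (\ref{it:C-0}); adding and multiplying by $2\pi$ gives $\tfrac{4\pi}{\re(z)}(\eps + Ce^{-R\re(z)})$.

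There is no real obstacle here; the only thing to be careful about is the elementary geometric fact $d(x,\gamma_{x,v}(t)) \leq |t|$, which holds because $\gamma_{x,v}$ is a unit-speed curve from $x$ to $\gamma_{x,v}(t)$ of length $|t|$, so the Riemannian distance is at most that. This is what converts the hypothesis "$f$ vanishes (or is small) on the metric ball $B(x,R)$" into "the integrand vanishes (or is small) for $|t| < R$." Everything else is the trivial computation $\int_{\R} e^{-|t|a}\,dt = 2/a$ and its tail version, plus the normalization $\mathrm{vol}(S_xM) = 2\pi$. I would present parts (\ref{it:C-0}) and (\ref{it:C-eps}) together since (\ref{it:C-0}) is the special case $\eps = 0$ of (\ref{it:C-eps}), or derive (\ref{it:C-eps}) from (\ref{it:Pi0-bounded}) and (\ref{it:C-0}) by writing $f = f\mathbf{1}_{B(x,R)} + f\mathbf{1}_{B(x,R)^c}$ and applying the linearity of $\Pi_0^{(z)}$ with the triangle inequality.
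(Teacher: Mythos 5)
Your proposal is correct and follows essentially the same route as the paper: the paper also proves part (\ref{it:C-eps}) by splitting the $t$-integral at $|t|=R$, bounding $|f|\leq\eps$ near $x$ and $|f|\leq C$ far away, using $\int_{\R}e^{-|t|\re(z)}dt = 2/\re(z)$ and $\vol(S_x\D)=2\pi$, and then obtains (\ref{it:C-0}) and (\ref{it:Pi0-bounded}) as the special cases $\eps=0$ and $\eps=C$, $R\to\infty$. Your explicit justification of $d(x,\gamma_{x,v}(t))\leq|t|$ is a small point the paper leaves implicit, but the argument is the same.
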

\begin{proof} Note that \ref{it:C-eps} implies \ref{it:C-0} by taking $\eps = 0$, and \ref{it:C-eps} implies \ref{it:Pi0-bounded} by letting $\eps = C$ and taking $R \to \infty$. Thus it only remains to show \ref{it:C-eps}. This bound follows quickly by breaking up the integral so that we consider the region close to $x$ and the region far from $x$ separately:
\begin{align*}
	|\Pi_0^{(z)}f(x)| &= 2\l|\int_{S_x\D}\int_{0}^{\infty}e^{-tz}f(\varphi_t(x,v))dtdS_x(v)\r|\\
	&\leq 2\eps\int_{S_x\D}\int_{0}^{R}|e^{-tz}|dtdS_x(v)
	  + 2C\int_{S_x\D}\int_{R}^{\infty}|e^{-tz}|dtdS_x(v)\\
	&\leq 2 \cdot 2\pi(\eps+Ce^{-Rz}) \int_{0}^{\infty} |e^{-tz}| dt
	\leq \f{4\pi}{\re(z)}(\eps + Ce^{-R\re(z)}).
\end{align*}
\end{proof}

\begin{proof}[Proof of Theorem~\ref{thm:M-att-inv}]
By Lemma~\ref{lem:norm} we may assume without loss of generality that $M$ has constant curvature $-1$, and we verify the inversion formula in this case at an arbitrary $p \in M$ by lifting the problem to the Poincar\'e disk. Let $\pi: \D \to M$ be the universal Riemannian covering map and note, after composing with an isometry of the Poincar\'e disk if necessary, we may take $\pi(0) = p$. We will verify the Theorem~\ref{thm:M-att-inv} inversion formula by showing the lifts of both sides to the Poincar\'e disc are equal at this point; that is, we will show
\begin{equation*}
	\pi^{*}(L^{(z)}S^{(z)}\Pi_0^{(z)}f)(0) = -8\pi^2(\pi^{*}f)(0).
\end{equation*}
By Lemma~\ref{lem:pi-op-comm}, we may commute the pullback $\pi^{*}$ with these operators and so we only must verify the following equivalence on the Poincar\'e disk:
\begin{equation*}
	(L^{(z)}S^{(z)}\Pi_0^{(z)}\pi^*f)(0) = -8\pi^2\pi^{*}f(0).
\end{equation*}
Note the above equality nearly follows from Theorem~\ref{thm:D-att-inv}, but this theorem only holds for compactly supported functions, and $\pi^{*}f$ is almost certainly not compactly supported. To resolve this, we take a smooth radial cutoff function $\chi_R \leq 1$ with $\chi_R(x) = 1$ for $d(0,x) \leq R$ and $\chi_R(x) = 0$ for $d(0,x) \geq 2R$, so that for all $R$ we can write
\begin{align*}
		L^{(z)}S^{(z)}\Pi_0^{(z)}\pi^{*}f(0)
		&= L^{(z)}S^{(z)}\Pi_0^{(z)}(\chi_R \pi^{*}f + (1-\chi_R) \pi^{*}f)(0)\\
		&= -8\pi^2\pi^{*}f(0) + (L^{(z)}S^{(z)}\Pi_0^{(z)}(1-\chi_R)\pi^{*}f)(0).
\end{align*}
Note we applied Theorem~\ref{thm:D-att-inv} in the second equality, so now it only remains to show the last term goes to $0$ as $R \to \infty$. Note that by the same reasoning as in the proof of Theorem~\ref{thm:D-att-inv}, it suffices to assume $\pi^*f$ is radial, which simplifies the argument by allowing us to use commutativity of convolution and (\ref{eq:Lz-conv}) to commute $L^{(z)}$ with the other operators:
\begin{align*}
	(L^{(z)}S^{(z)}\Pi_0^{(z)}(1-\chi_R)\pi^{*}f)(0) 
	&= 2L^{(z)}((1-\chi_R)\pi^{*}f \ast \tau^{(z)} \ast \sigma^{(z)})(0)\\
	&= 2(L^{(z)}((1-\chi_R)\pi^{*}f) \ast \tau^{(z)} \ast \sigma^{(z)})(0)\\
	&= S^{(z)}\Pi_0^{(z)}L^{(z)}((1-\chi_R)\pi^{*}f)(0).
\end{align*}
Now define $h_R = L^{(z)}(1-\chi_R)\pi^{*}f$ and by the above computation, it suffices to show $S^{(z)}\Pi_0^{(z)}h_R(0) \to 0$ as $R \to \infty$. Furthermore, note $\pi^{*}f$ will have bounded derivatives, and we can take $\chi_R$ to have bounded first and second derivatives uniformly in $R$. That is, we can take $h_R \leq C$ to be uniformly bounded for all $R$, and note we will have $h_{R}(y) = 0$ for $d(0,y) \leq R$. Thus by Lemma~\ref{lem:eps} \ref{it:C-0}, we find that for any $x$ with $d(0,x) \leq R/2$ we have
\begin{align*}
	|\Pi_0^{(z)}h_R(x)| \leq \f{4\pi}{\re(z)} Ce^{-R/2 \re(z)}.
\end{align*}
Additionally, Lemma~\ref{lem:eps} \ref{it:Pi0-bounded} provides the uniform bound $|\Pi_0^{(z)}h_R(x)| \leq \f{4\pi}{\re(z)} C$ for all $x \in \D$. Recall $S^{(z)} = \f{1}{2}\Pi_0^{(z+1)}$ so Lemma~\ref{lem:eps} \ref{it:C-eps} with $\eps = \f{4\pi}{\re(z)} Ce^{-R/2 \re(z)}$ yields
\begin{align*}
	|S^{(z)}\Pi_0^{(z)}h_R(0)| 
	\leq \f{1}{2} \f{4\pi}{\re(z)} \cdot \f{4\pi}{\re(z+1)} C \l(e^{-R/2\re(z)} + e^{-R/2\re(z+1)}\r),
\end{align*}
which implies $|S^{(z)}\Pi_0^{(z)}h_R(0)| \to 0$ as $R \to \infty$.
\end{proof}

\section{Inversion of the normal operator over closed hyperbolic surfaces}

In this section we prove the inversion formula of Theorem~\ref{thm:inv-formula} by passing the Theorem~\ref{thm:M-att-inv} inversion formula to the limit $z \to 0$. The crux is showing continuity of the operator $\Pi_0^{(z)}$ in an appropriate sense.

\begin{lemma}
	On any Anosov manifold $M$ and $s > 0$, the operator $\Pi_0^{(z)}: H^s(M) \to H^s(M)$ is  meromorphic in $z$ and a bounded operator when defined.
\end{lemma}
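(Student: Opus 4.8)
The idea is to write $\Pi_0^{(z)} = \pi_{0*}\Pi^{(z)}\pi_0^*$ and track the mapping properties of each factor, reducing the statement to the meromorphy of the resolvents $R_\pm(z) = (z\pm X)^{-1}$ recalled in Section~\ref{sec:mer-ext}. Concretely, $\pi_0^*: H^s(M)\to H^s(SM)$ is bounded for every $s$, and $\pi_{0*}: H^{s}(SM)\to H^{s}(M)$ is bounded as well (integration over the compact fiber $S_xM$ is a smoothing-in-the-fiber operation, certainly nonincreasing on Sobolev scales). So it suffices to show that $\Pi^{(z)} = R_+(z)+R_-(z)$, restricted to the range of $\pi_0^*$ (i.e. functions constant along the fibers) and then followed by $\pi_{0*}$, is meromorphic and bounded $H^s(SM)\to H^s(SM)$ in $z$.

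**Main steps.** First, for $\re z > 0$ the integral representation \eqref{eq:Piz} shows $\Pi^{(z)}:L^2(SM)\to L^2(SM)$ is bounded with norm $O(1/\re z)$, and by commuting $\varphi_t^*$ with the generators $X,H,V$ (the flow acts smoothly, so $\varphi_t^*$ is bounded on each $H^s(SM)$ with at most polynomial-in-$t$ growth of the norm) one gets $\Pi^{(z)}:H^s(SM)\to H^s(SM)$ bounded for $\re z$ bounded below. Hence $\Pi_0^{(z)}:H^s(M)\to H^s(M)$ is holomorphic and bounded on $\{\re z > 0\}$. Second, to extend past $\re z = 0$, invoke Faure--Sj\"ostrand together with the Dyatlov--Zworski wavefront bounds as quoted in Section~\ref{sec:mer-ext}: $R_\pm(z):H^s_\pm\to H^s_\pm$ extends meromorphically to a neighborhood of $\{\re z \le 0\}$, with poles (the Pollicott--Ruelle resonances) of finite rank. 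Third, I need the meromorphic extension to land back in $H^s(M)$ after $\pi_{0*}$, despite $H^s_\pm$ only embedding in $H^{-s}(SM)$ in general. Here one uses the ellipticity/regularity structure: $\pi_0^* u$ has wavefront set contained in the conormal bundle of the fibers, which is transverse to $E_s^*$ and $E_u^*$; applying a microlocal partition of unity $B_s + B_u + B_0 = \id$ with $\WF(B_{s/u})\subset V_{s/u}$, the term $B_s R_+(z)\pi_0^* u$ is bounded into $H^s(SM)$ (since $B_s:H^s_+\to H^s(SM)$), the term $B_u R_-(z)\pi_0^* u$ likewise, and the "easy" cross terms $B_u R_+(z)$, $B_s R_-(z)$, $B_0(\cdot)$ are handled because away from the bad directions the resolvents are pseudodifferential of order $-1$ (or because $\pi_0^* u$ is already smooth there). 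Then $\pi_{0*}$ of each piece lands in $H^s(M)$. Alternatively and more cleanly, one can cite that $\Pi_0 = \pi_{0*}\Pi\pi_0^*$ is a pseudodifferential operator of order $-1$ (stated in Section~\ref{sec:pi}) and that the same argument with the attenuation parameter carried along shows $\Pi_0^{(z)}$ depends meromorphically on $z$ as a family in $\Psi^{-1}(M)\subset \mathcal L(H^s(M))$ for $z$ near $0$.

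**The main obstacle.** The delicate point is the third step: controlling the loss of regularity in the anisotropic spaces so that the meromorphically continued $\Pi_0^{(z)}$ still maps $H^s(M)\to H^s(M)$ (not merely $H^s(M)\to H^{-s}(M)$). This is exactly where one must use that $\pi_0^*$ produces functions with wavefront set transverse to both $E_s^*$ and $E_u^*$, together with the precise wavefront set of the continued resolvents from Dyatlov--Zworski, and that pushing forward by $\pi_{0*}$ integrates out the fiber directions where the anisotropic spaces differ from standard ones. Once this microlocal bookkeeping is in place, boundedness on $\{\re z > 0\}$ from the integral formula plus meromorphy near $z = 0$ from the resolvent theory combine to give meromorphy on all of a neighborhood of $\{\re z \ge 0\}$, with the only possible poles coming from the resonances of $\pm X$; away from those poles the operator is bounded, which is the claim.
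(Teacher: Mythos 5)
Your skeleton matches the paper's (decompose $\Pi_0^{(z)} = \pi_{0*}R_+(z)\pi_0^* + \pi_{0*}R_-(z)\pi_0^*$, feed $\pi_0^*$ into the anisotropic spaces of Faure--Sj\"ostrand, and recover $H^s(M)$ after $\pi_{0*}$), but the step where all the content lives is handled incorrectly. You label $B_u R_+(z)\pi_0^*u$ and $B_s R_-(z)\pi_0^*u$ as ``easy cross terms,'' justified because ``away from the bad directions the resolvents are pseudodifferential'' or because ``$\pi_0^*u$ is already smooth there.'' Neither reason applies: $B_u$ microlocalizes exactly at $E_u^*$, which is precisely where the continued resolvent $R_+(z)$ fails to be pseudodifferential (the Dyatlov--Zworski wavefront bound puts the flowout and the $E_u^*\times E_s^*$ component there), and microlocal smoothness of $\pi_0^*u$ near $E_u^*$ is not inherited by $R_+(z)\pi_0^*u$, since the resolvent propagates the singularities of $\pi_0^*u$ (which sit in $\X^*\oplus\H^*$) along the forward flowout, which accumulates on $E_u^*$. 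So $B_u R_+(z)\pi_0^*u$ is genuinely only of regularity $H^{-s}$ microlocally, and no argument of the type you give can place it in $H^s(SM)$. The paper's resolution is different and is the actual point of the lemma: one never tries to make $R_+(z)\pi_0^*u$ good near $E_u^*$; instead one shows $\pi_{0*}\colon H^s_+\to H^s(M)$ is bounded, because the pushforward only detects wavefront in $\X^*\oplus\H^* = \V^\perp$ (the conormal directions to the fibers), so $\pi_{0*}(1-B)$ is smoothing and the bad microlocal region is simply annihilated by the fiber integration, while $B\colon H^s_+\to H^s(SM)$ after choosing $V_s\supset \X^*\oplus\H^*$.

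This brings out the second gap: choosing $V_s$ to contain $\X^*\oplus\H^*$ while staying disjoint from a neighborhood of $E_u^*$ requires $(\X^*\oplus\H^*)\cap E_u^*=\{0\}$, equivalently $\V\cap(E_u\oplus\R X)=\{0\}$. You assert the transversality of the fiber-conormal with $E_s^*$ and $E_u^*$ as if it were obvious, but on a general Anosov manifold (the setting of the lemma) this is a theorem of Klingenberg (and Ma\~n\'e), not a formality; the paper cites it explicitly and it is the dynamical input that makes the whole argument run. Finally, your ``cleaner alternative'' --- citing that $\Pi_0\in\Psi^{-1}(M)$ and asserting that ``the same argument with the attenuation parameter'' gives a meromorphic family in $\Psi^{-1}(M)$ --- is not an argument: it quotes the $z=0$ conclusion and waves at the $z$-dependence, which is exactly what is to be proved.
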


\begin{proof}
	First use $\Pi^{(z)} = R_+(z) + R_-(z)$ to decompose the operator as
	\begin{equation}
		\Pi_0^{(z)} 
		= \pi_{0*}\Pi^{(z)}\pi_0^* 
		= \pi_{0*}R_+(z)\pi_0^* + \pi_{0*}R_-(z)\pi_0^*.
		\label{eq:resolvent-decomp}
	\end{equation}
	We will show $\pi_{0*}R_{\pm}(z)\pi_0^*: H^s(M) \to H^s(M)$ is bounded and meromorphic. Indeed, note $\pi_0^{*}: H^s(M) \to H^s(SM)$ is bounded, and $R_+(z): H^s_+ \to H^s_+$ is bounded and meromorphic in $z$ between the anisotropic Sobolev spaces $H^s_+$ by construction, which when combined with the bounded inclusion $H^s(SM) \inj H^s_+$ gives us a bounded and meromorphic map $R_+(z)\pi_0^*: H^s(M) \to H^s_+$. Thus it only remains to show $\pi_{0*}: H^s_+ \to H^s(M)$ is bounded, which requires analyzing the wavefront set of $\pi_{0*}$. Given $u \in D'(SM)$, then by \cite[Proposition 11.3.3]{friedlander-distributions} we have:
	\begin{align*}
		\WF(\pi_{0*}u)
		\subset
		\{(x,\xi) \in T^*M : \exists v \in S_xM, ((x,v), d\pi_0^{\top}\xi, 0) \in \WF(u)\}.
	\end{align*}
	Note that by $\ker(d\pi_{0}) = \V$, we have $((x,v), d\pi_0^{\top}\xi, 0) \in \X^* \oplus \H^*$. We compare this wavefront to the unstable bundle $E^*_u$ for this is the direction $H^s_+$ has low microlocal regularity. Indeed, by definition we have $(\X^* \oplus \H^*)(\V) = 0$ and $E_u^*(E_u \oplus \X) = 0$, but by transversality \cite{1974-klingenberg-riemannian, 1987-mane-on-a-theorem-of-klingenberg} we have $\V \cap (E_u \oplus \X) = \{0\}$
and therefore $(\X^* \oplus \H^*) \cap E_u^* = \{0\}$, which is the key property to showing continuity of $\pi_{0*}$. To complete this claim formally, we use the notation of Section~\ref{sec:mer-ext} and choose the conic neighborhood $V_s$ of $E_s^*$ to contain $\X^* \oplus \H^*$. Next choose symbol $b \in S^0(T^*SM)$ so that $b(x,\xi) \equiv 1$ in a neighborhood of $\X^* \oplus \H^*$ and $b(x,\xi) = 0$ outside of $V_s$. Consider a quantization $B = \Op(b) \in \Psi^0(SM)$ and decompose
	\begin{align*}
		\pi_{0*} = \pi_{0*}B + \pi_{0*}(1-B).
	\end{align*}
Because $\WF(1-B)$ does not intersect $\X^* \oplus \H^*$, we conclude the term $\pi_{0*}(1-B)$ is smoothing and therefore is continuous as a map $H^{s}_+ \to H^s(SM)$.
Next note that because $\WF(B)$ is contained in $V_s$, it follows from definition of anisotropic Sobolev spaces that
	\begin{align*}
		\|\pi_{0*}Bu\|_{H^s(M)} \leq C_1\|Bu\|_{H^s(SM)} \leq C_2 \|u\|_{H^s_+}
	\end{align*}
	where we used $\pi_{0*}: H^s(SM) \to H^s(M)$ is bounded. Therefore we conclude $\pi_{0*}: H^s_+ \to H^s(M)$ is bounded, implying $\pi_{0*}R_{+}(z)\pi_0^*: H^s(M) \to H^s(M)$ is bounded and meromorphic in $z$ and so the proof is complete after repeating the analogous argument for $\pi_{0*}R_{-}(z)\pi_0^*$.
\end{proof}
In particular, we obtain the following stronger results after distinguishing between $z = 0$ and $\re(z) > 0$.
\begin{cor}
	For an Anosov manifold $M$ and $s > 0$ we have
	\begin{enumerate}[(a)]
		\item If $f \in H^s(M)$ satisfies $\int_M f \dvol = 0$, then $\Pi_0^{(z)}f: \C \to H^s(M)$ is holomorphic in a neighborhood of $z = 0$. \label{it:z=0}
		\item $\Pi_0^{(z)}: H^s(M) \to H^s(M)$ is bounded and holomorphic over $\re(z) > 0$. \label{it:z>0}
	\end{enumerate}
	\label{cor:hol}
\end{cor}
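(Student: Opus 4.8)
\textbf{Proof proposal for Corollary~\ref{cor:hol}.}

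The plan is to deduce both statements from the preceding lemma together with the structure of the Laurent expansion of the resolvents $R_{\pm}(z)$ recorded in Section~\ref{sec:mer-ext}. The lemma already gives that $\Pi_0^{(z)} = \pi_{0*}R_+(z)\pi_0^* + \pi_{0*}R_-(z)\pi_0^*: H^s(M) \to H^s(M)$ is meromorphic in $z$ and bounded where defined, and the proof of the lemma in fact shows that the only possible poles come from poles of $R_{\pm}(z): H^s_\pm \to H^s_\pm$, since $\pi_0^*$, $\pi_{0*}$ and the inclusion $H^s(SM)\hookrightarrow H^s_\pm$ contribute no poles. So everything reduces to locating the poles of $R_\pm(z)$ as they act, post-composed with $\pi_{0*}$ and pre-composed with $\pi_0^*$, on the relevant functions.

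For part \ref{it:z>0}, I would simply invoke the fact quoted in Section~\ref{sec:mer-ext} that over an Anosov manifold the extensions $R_{\pm}(z): H^s_\pm \to H^s_\pm$ have no poles for $\re z > 0$. Combined with the boundedness of $\pi_0^*: H^s(M)\to H^s(SM)\hookrightarrow H^s_\pm$ and of $\pi_{0*}: H^s_\pm \to H^s(M)$ established in the lemma, this shows $\pi_{0*}R_{\pm}(z)\pi_0^*$ is bounded and holomorphic on $\re z > 0$, hence so is their sum $\Pi_0^{(z)}$. For part \ref{it:z=0}, I would use the Laurent expansion $R_{\pm}(z) = R_{\pm}^{\hol}(z) + \Pi^{\pm}/z$ near $z = 0$, where $\ip{\Pi^{\pm}u}{v} = \pm\ip{u}{1}\ip{v}{1}$. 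Applying this to $u = \pi_0^* f$ and pushing forward, the singular term contributes $\pm\ip{\pi_0^* f}{1}\,\pi_{0*}(1)/z$; the pairing $\ip{\pi_0^* f}{1}_{L^2(SM,d\Sigma)}$ equals, up to the constant $\vol(S_xM)$, $\int_M f\,\dvol$, which vanishes by hypothesis. Hence the $1/z$ terms drop out and $\Pi_0^{(z)}f = \pi_{0*}(R_+^{\hol}(z) + R_-^{\hol}(z))\pi_0^* f$ near $z = 0$, which is holomorphic there with values in $H^s(M)$ by the same mapping properties as above (noting $R_\pm^{\hol}(z)$ maps $H^s_\pm \to H^s_\pm$ holomorphically near $0$).

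The one point requiring a little care — and the main (mild) obstacle — is bookkeeping the distributional pairings: the expansion of $R_\pm$ is stated for pairings against the Sasaki volume $d\Sigma$ on $SM$, so I must check that $\ip{\pi_0^* f}{1}_{SM}$ is proportional to $\int_M f\,\dvol_M$ with a nonzero constant (this is exactly the coarea/fiber-integration identity $\int_{SM}\pi_0^*f\,d\Sigma = \vol(\mathbb{S}^{n-1})\int_M f\,\dvol_M$), and similarly that $\pi_{0*}1$ is a nonzero constant function, so that the residue term really is a rank-one operator killed precisely by the mean-zero condition. Once this normalization is pinned down, both parts follow immediately, so I would keep the argument short and cite the lemma and Section~\ref{sec:mer-ext} for everything else.
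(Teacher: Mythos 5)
Your proposal is correct and follows essentially the same route as the paper: decompose $\Pi_0^{(z)} = \pi_{0*}R_+(z)\pi_0^* + \pi_{0*}R_-(z)\pi_0^*$, use the mapping properties of $\pi_0^*$, $\pi_{0*}$ from the preceding lemma, invoke the absence of poles of $R_\pm(z)$ for $\re z > 0$ for part (b), and for part (a) use that the mean-zero condition on $f$ (equivalently, on $\pi_0^*f$ over $SM$, via fiber integration) kills the rank-one residue at $z=0$ so only the holomorphic parts $R_\pm^{\hol}(z)$ survive. Your write-up merely makes explicit the residue computation and the normalization constant that the paper's proof leaves implicit.
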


\begin{proof}
	Note that if both resolvents $R_{\pm}^{(z)}: H^s_{\pm} \to H^s_{\pm}$ are holomorphic in a neighborhood of some $z_0 \in \C$, then after decomposing $\Pi_0^{(z)}$ by (\ref{eq:resolvent-decomp}), we will find that $\Pi_0^{(z)}$ is holomorphic in this same neighborhood as a map into $H^s(M)$. Thus for \ref{it:z=0}, recall the condition $\int_M f \dvol = 0$ implies $R_{\pm}^{(z)}: H^s_{\pm} \to H^s_{\pm}$ is holomorphic in a neighborhood of $z = 0$ and therefore $\Pi_0^{(z)}f: \C \to H^s(M)$ is holomorphic in this same neighborhood. For \ref{it:z>0}, simply recall $R_{\pm}^{(z)}: H^s_{\pm} \to H^s_{\pm}$ is holomorphic for $\re(z) > 0$.
\end{proof}

We are nearly ready to prove Theorem~\ref{thm:inv-formula}. However, the nature of the inversion formula of Theorem~\ref{thm:M-att-inv} requires that we show the \emph{joint} continuity of the operators $S^{(z)}$ and $L^{(z)}$ in order to properly pass to the limit $z \to 0$. For this, we recall the following fact from functional analysis, which 
allows us to conclude joint continuity from separate continuity.

\begin{lemma}
	Let $X$ and $Y$ be Banach spaces, let $S$ be a metric space, and consider a family $A: S \times X \to Y$ of bounded linear operators $X \to Y$ parametrized by $S$. If $A(\boldsymbol{\cdot}, x_0): S \to Y$ is continuous at $s_0$, then $A$ is jointly continuous at $(s_0, x_0)$.
	\label{lem:jointly-cts}
\end{lemma}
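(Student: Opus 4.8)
The plan is to reduce the claimed joint continuity to a uniform boundedness statement, then invoke the Banach--Steinhaus theorem. Fix $\varepsilon > 0$. For $(s,x)$ near $(s_0, x_0)$ we split
\begin{equation*}
	A(s,x) - A(s_0, x_0) = \bigl(A(s,x) - A(s, x_0)\bigr) + \bigl(A(s, x_0) - A(s_0, x_0)\bigr) = A(s)(x - x_0) + \bigl(A(s, x_0) - A(s_0, x_0)\bigr),
\end{equation*}
writing $A(s)$ for the bounded linear operator $x \mapsto A(s,x)$. The second term tends to $0$ as $s \to s_0$ by the hypothesized continuity of $A(\boldsymbol{\cdot}, x_0)$, so the whole issue is controlling the first term $\|A(s)(x-x_0)\|_Y$ uniformly for $s$ close to $s_0$.

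The key step is to show that $\{A(s) : d_S(s, s_0) \le \delta\}$ is a uniformly bounded family of operators in $\mathcal{L}(X, Y)$ for some $\delta > 0$. To get this from Banach--Steinhaus, I would first upgrade the pointwise-in-$x_0$ continuity hypothesis to pointwise-in-every-$x$ continuity: for fixed $x \in X$, the map $s \mapsto A(s, x)$ need not be assumed continuous, but it \emph{is} bounded near $s_0$. Indeed, the cleaner route is to argue by contradiction — if no such $\delta$ and uniform bound existed, we could pick a sequence $s_n \to s_0$ with $\|A(s_n)\|_{\mathcal{L}(X,Y)} \to \infty$; since each $A(s_n)$ is bounded, the Banach--Steinhaus theorem (applied to the family $\{A(s_n)\}_n$ acting on the Banach space $X$) would then furnish some $x \in X$ with $\sup_n \|A(s_n)x\|_Y = \infty$. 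To derive a contradiction from this, observe that in the application we care about, the operators are also continuous in $s$ for \emph{every} fixed $x$ — but since the lemma as stated only assumes continuity at $x_0$, I would instead note that the hypothesis we actually want is that each $A(\boldsymbol{\cdot}, x)$ is \emph{bounded} near $s_0$; this follows because $A(\boldsymbol{\cdot}, x_0)$ is continuous hence locally bounded, and for the displayed application ($A(z) = L^{(z)} S^{(z)}$, etc.) the operators manifestly satisfy this. Cleanest is therefore to add the mild standing hypothesis, or to observe that in fact $A(\boldsymbol{\cdot}, x)$ continuous at $s_0$ for all $x$ is what one has in practice; granting local boundedness of $s \mapsto A(s,x)$ for each $x$, Banach--Steinhaus gives $\sup_{d_S(s,s_0) \le \delta}\|A(s)\|_{\mathcal{L}(X,Y)} =: C < \infty$.

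With the uniform bound $C$ in hand, the estimate $\|A(s)(x - x_0)\|_Y \le C\|x - x_0\|_X \to 0$ as $x \to x_0$, uniformly in $s$ with $d_S(s,s_0)\le\delta$, combines with the second-term estimate to give $\|A(s,x) - A(s_0,x_0)\|_Y \to 0$ as $(s,x) \to (s_0, x_0)$, which is joint continuity at $(s_0, x_0)$. The main obstacle is the bookkeeping around which pointwise hypothesis suffices to trigger Banach--Steinhaus: the statement as written only gives continuity of $A(\boldsymbol{\cdot}, x_0)$, so strictly one needs either local boundedness of $s \mapsto A(s,x)$ for all $x$ (which should be stated, and which holds trivially in the intended application where $A(z)$ is explicitly holomorphic in $z$ near $0$) or the genuinely weaker-looking but in practice available full continuity; I expect the paper intends the latter and the proof is otherwise a two-line Banach--Steinhaus argument.
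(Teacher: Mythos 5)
Your proposal is correct and takes essentially the same route as the paper: the same two-term decomposition $A(s,x)-A(s_0,x_0)=A(s)(x-x_0)+(A(s,x_0)-A(s_0,x_0))$ together with the uniform boundedness principle applied along a sequence $s_n \to s_0$. Your concern about the stated hypothesis is well founded but resolves as you guessed: the paper's proof asserts $\sup_n\|A(s_n,x)\|<\infty$ for \emph{each} $x\in X$ ``by continuity in $s$,'' i.e.\ it implicitly reads the lemma as assuming separate continuity of $s\mapsto A(s,x)$ at $s_0$ for every $x$ (consistent with the surrounding text and with the application, where $z\mapsto A(z)$ is holomorphic), which is exactly the strengthening your Banach--Steinhaus step requires.
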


\begin{proof}
	Let $x_n \to x_0$ and $s_n \to s_0$, then note $\sup_{n}\|A(s_n, x)\| < \infty$ for each $x \in X$ by continuity in $s$, so by uniform boundedness $\sup_{n \in \N}\|A(s_n, \cdot)\| < \infty$. Therefore:
	\begin{align*}
		\|A(s_n, x_n) - A(s,x)\|
		&\leq \|A(s_n, x_n) - A(s_n, x)\| + \|A(s_n, x) - A(s,x)\|\\
		&\leq \sup_{n \in \N}\|A(s_n, \cdot)\| \cdot \|x_n - x\| + \|A(s_n, x) - A(s, x)\| \to^{n \to \infty} 0. 
	\end{align*}
\end{proof}

\begin{proof}[Proof of Theorem~\ref{thm:inv-formula}]
	By Lemma~\ref{lem:norm} we may assume without loss of generality that $M$ has constant curvature $-1$, and by Theorem~\ref{thm:M-att-inv} we have in this case the inversion formula
	\begin{equation}
		(\Delta - z(z+1)) S^{(z)} \Pi^{(z)}_0 f = -8\pi^2 f
		\label{eq:take-limit-of}
	\end{equation}
	for all $\re(z) > 0$. We will show that as $z \to 0$ we have $(\Delta - z(z+1)) S^{(z)} \Pi^{(z)}_0 f \to \Delta S \Pi_0 f$ in $H^s(M)$ for any $s > 0$ and thus we recover the Theorem~\ref{thm:inv-formula} inversion formula after taking the limit of both sides of (\ref{eq:take-limit-of}). For this we need continuity at $z = 0$ and in particular we will show there is a neighborhood $U \subset \C$ of $0$ such that
	\begin{enumerate}[(a)]
		\item $\Pi_0^{(z)}f: U \to H^s(M)$ is continuous,\label{it:Pi-cont}
		\item $S^{(z)}: U \times H^s(M) \to H^s(M)$ is jointly continuous,\label{it:S-cont}
		\item $L^{(z)}: U \times H^s(M) \to H^{s-2}(M)$ is jointly continuous.\label{it:L-cont}
	\end{enumerate}
	Notice \ref{it:Pi-cont} immediately follows from Corollary~\ref{cor:hol}. In order to show joint continuity, we use Lemma~\ref{lem:jointly-cts}. Indeed, using $S^{(z)} = \f{1}{2}\Pi_0^{(z+1)}$, Corollary~\ref{cor:hol} promises a family of bounded operators $S^{(z)}: H^{s}(M) \to H^{s}(M)$ that is holomorphic in $z$ and therefore continuous, so we have \ref{it:S-cont}. Finally, because $L^{(z)} = \Delta - z(z+1)$, the continuity of $L^{(z)}$ follows from the continuity of multiplication by $z(z+1)$, giving \ref{it:L-cont}.
\end{proof}



Finally, we describe how the Theorem~\ref{thm:inv-formula} inversion formula provides an explicit construction for invariant distributions with prescribed pushforward over closed hyperbolic surfaces.
\begin{cor}
	Let $M$ be a closed surface of constant curvature $K < 0$ and take $f \in C^{\infty}(M)$. Then an explicit solution to $Xw = 0$ with $\pi_{0*}w = f$ is given by
	\begin{equation}
		w = - (8\pi^2)^{-1} \Pi \pi_{0}^* \Delta S_K (f - \ol{f}) + \f{\ol{f}}{2\pi}
		\label{eq:inv-dist}
	\end{equation}
	where $\ol{f}$ denotes the average value of $f$. Furthermore, $w \in H^r(M)$ for all $r < 0$.
	\label{cor:inv-dist}
\end{cor}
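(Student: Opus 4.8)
The plan is to verify the two required properties of $w$ directly, using Theorem~\ref{thm:inv-formula} together with the basic properties $X\Pi = 0$ and $\Pi 1 = 0$ recorded in Section~\ref{sec:pi}. Write $g = f - \ol{f}$, so that $g$ has average value zero, and decompose $w = w_0 + w_1$ where $w_0 = -(8\pi^2)^{-1}\Pi\pi_0^*\Delta S_K g$ and $w_1 = \ol{f}/(2\pi)$ is constant. First I would check $Xw = 0$: since $w_1$ is constant, $Xw_1 = 0$, and $Xw_0 = -(8\pi^2)^{-1} (X\Pi)\pi_0^*\Delta S_K g = 0$ because $X\Pi = 0$. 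So $Xw = 0$ is immediate.

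Next I would check $\pi_{0*}w = f$. For the constant term, $\pi_{0*}w_1 = \pi_{0*}(\ol{f}/(2\pi))$ and since the fibers $S_xM$ have total measure $2\pi$ (the circumference appearing in $dS_x$), $\pi_{0*}(\text{const } c) = 2\pi c$, giving $\pi_{0*}w_1 = \ol{f}$. For $w_0$, I compute $\pi_{0*}w_0 = -(8\pi^2)^{-1}\pi_{0*}\Pi\pi_0^*(\Delta S_K g) = -(8\pi^2)^{-1}\Pi_0(\Delta S_K g)$ by definition of the normal operator $\Pi_0 = \pi_{0*}\Pi\pi_0^*$. Now the point is to recognize $\Delta S_K g = \Delta S_K \Pi_0 h$ for an appropriate $h$; more precisely, I want to invoke Theorem~\ref{thm:inv-formula}, which says $\Delta S_K \Pi_0 h = -8\pi^2 h$ for $h$ of average zero. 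The subtlety is that $\Pi_0$ commutes with $\Delta$ and $S_K$ in the right way. Since $\Pi_0$ is a pseudodifferential operator arising from $\Pi$, which satisfies $\Pi X = 0$ as well (or can be seen to commute with the Laplacian as noted via \cite[Remark A.2]{guillarmou2017reconstruction}), and since $S_K = \frac12 \Pi_0^{(\sqrt{-K})}$ together with $\Pi_0 = \pi_{0*}\Pi\pi_0^*$, one checks $\Pi_0 \Delta S_K = \Delta S_K \Pi_0$ on average-zero functions — alternatively, and more simply, I would apply Theorem~\ref{thm:inv-formula} with $f$ replaced by $g$ directly and observe that $\Pi_0 \Delta S_K g = \Delta S_K \Pi_0 g$ is not literally what the theorem gives, so instead the cleanest route is: the theorem asserts $\Delta S_K \Pi_0 = -8\pi^2\,\mathrm{Id}$ on average-zero functions, hence on such functions $\Pi_0$ is a left inverse (up to constant) of $\Delta S_K$ precisely when these commute, which holds because all three operators are functions of the Laplacian on the base (being $G$-invariant / built from geodesic averaging on a constant-curvature manifold). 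Therefore $\Pi_0 \Delta S_K g = \Delta S_K \Pi_0 g = -8\pi^2 g$, giving $\pi_{0*}w_0 = -(8\pi^2)^{-1}(-8\pi^2 g) = g = f - \ol{f}$. Adding, $\pi_{0*}w = (f - \ol{f}) + \ol{f} = f$.

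Finally, for the regularity claim $w \in H^r(M)$ for all $r < 0$: the constant term is smooth, so it suffices to bound $w_0$. Here $g \in C^\infty(M)$, so $S_K g \in C^\infty(M)$ (as $S_K = \frac12\Pi_0^{(\sqrt{-K})}$ is an elliptic $\Psi$DO of order $-1$ by the discussion in Section~\ref{sec:pi}, or directly since $\re(\sqrt{-K}) > 0$ places us in the smoothing-enough regime), hence $\Delta S_K g \in C^\infty(M)$, and then $\pi_0^*(\Delta S_K g) \in C^\infty(SM)$. Applying $\Pi: H^s(SM) \to H^{-s}(SM)$ for any $s > 0$ (with $s$ small, say, or any $s$) yields $\Pi \pi_0^* \Delta S_K g \in H^{-s}(SM)$ for all $s > 0$, equivalently in $\bigcap_{s>0} H^{-s}(SM)$. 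One should be slightly careful: $\Pi$ maps $H^s \to H^{-s}$, so starting from a smooth function we land in $H^{-s}(SM)$ for every $s>0$, i.e. in $H^{r}(SM)$ for every $r<0$. Thus $w_0 \in H^r(SM)$ for all $r < 0$, and since the statement concerns $w$ as a distribution on $M$ (via $\pi_{0*}w$ being the relevant object, but $w$ itself lives on $SM$), I would phrase the conclusion as $w \in H^r(M)$ in the sense intended by the paper — more carefully, since $X w = 0$, $w$ is an invariant distribution and its regularity is governed by that of $\Pi$ applied to a smooth function, which is exactly $H^{r}$ for all $r<0$. The main obstacle in this proof is the commutation step in the second paragraph — making precise that $\Pi_0$, $\Delta$, and $S_K$ commute on average-zero functions (so that Theorem~\ref{thm:inv-formula}, which has $\Pi_0$ on the right, can be used to conclude $\Pi_0 \Delta S_K g = -8\pi^2 g$ with $\Pi_0$ on the left); this follows from all three being $G$-invariant operators on a constant-curvature surface, hence functions of $\Delta$, but deserves an explicit remark.
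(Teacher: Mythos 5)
Your overall structure (split off the constant, use $X\Pi=0$ for invariance, compute $\pi_{0*}$ of the constant via the fiber mass $2\pi$, get regularity from $\Pi\colon H^s(SM)\to H^{-s}(SM)$) matches the paper, but at the one step that actually carries the weight you take a genuinely different route, and that step is not yet proved. Theorem~\ref{thm:inv-formula} gives $\Delta S_K \Pi_0 = -8\pi^2\,\mathrm{Id}$ on average-zero functions, i.e.\ a \emph{left} inverse of $\Pi_0$, while $\pi_{0*}w_0 = -(8\pi^2)^{-1}\Pi_0\Delta S_K g$ requires the composition in the opposite order. The paper closes this by showing $\Pi_0$ is a \emph{bijection} of $C^\infty_\diamond(M)$ (average-zero smooth functions), combining the injectivity and Fredholm-index-zero statements of Gouzel--Lefeuvre \cite{2021-gouzel-lefeuvre} with ellipticity of $\Pi_0$ and $\Pi 1=0$; for a bijection a left inverse is automatically a right inverse, and no commutation of operators is ever needed. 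You instead argue $\Pi_0\Delta S_K g=\Delta S_K\Pi_0 g$ on the grounds that $\Pi_0$, $S_K$, $\Delta$ are ``$G$-invariant operators on a constant-curvature surface, hence functions of $\Delta$.''

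As written this is a real gap: the closed quotient $M$ carries no $G$-action, so the invariance argument does not literally apply there. To make your route rigorous you would need (i) the representation-theoretic fact that $\Pi_0$ is a function of the Laplacian on a closed hyperbolic surface, i.e.\ Guillarmou--Monard \cite[Remark A.2]{guillarmou2017reconstruction} (a result the paper cites but deliberately avoids relying on), and (ii) a separate argument that $S_K$ commutes with $\Delta$ on $M$ --- for instance by noting its kernel is the point-pair invariant obtained by summing the radial kernel of Lemma~\ref{lem:op-conv} over the deck group (with convergence from the decay $e^{-2d}$), or by proving $\Delta\Pi_0^{(z)}=\Pi_0^{(z)}\Delta$ for $\re z>0$ and passing to $z\to0$ with the continuity of Corollary~\ref{cor:hol}. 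Neither is supplied beyond your remark that the point ``deserves an explicit remark,'' so the crucial identity currently rests on an assertion. Either import the paper's bijectivity argument, or write out (i)--(ii); with that done, the rest of your proof (invariance, the constant term, and the $H^r$, $r<0$, regularity, where your care about $w$ living on $SM$ rather than $M$ is in fact a point the paper glosses over) is fine, modulo the small slip of calling $\Pi_0^{(\sqrt{-K})}$ an elliptic $\Psi$DO --- the boundedness $H^s\to H^s$ from Corollary~\ref{cor:hol}\ref{it:z>0} is what you actually need to see that $S_K$ preserves smoothness.
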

\begin{proof}
	First note by $X\Pi = 0$ and $\ol{f}/2\pi$ a constant, we immediately get the invariance $Xw = 0$. For the pushforward property, first consider the operator $\Pi_0 + \pi_{0*} 1 \otimes 1 \pi_{0*}: H^s(M) \to H^{s+1}(M)$ which is injective by \cite[Lemma 4.6]{2021-gouzel-lefeuvre} and, as noted in the proof of \cite[Theorem 4.7]{2021-gouzel-lefeuvre}, this operator is Fredholm of index $0$, so therefore is bijective. Using $\Pi_0$ self-adjoint and $\Pi 1 = 0$, we find $\Pi_0$ maps to functions with average value zero: $\ip{\Pi_0 h}{1}_{L^2(M)} = \ip{h}{\Pi_0 1}_{L^2(M)} = 0$. This together with $\Pi_0$ an elliptic pseudodifferential operator implies $\Pi_0: C_{\diamond}^{\infty}(M) \to C_{\diamond}^{\infty}(M)$ is a bijection where
$C_{\diamond}^{\infty}(M) = \{h \in C^{\infty}(M) : \ip{h}{1}_{L^2(M)} = 0\}$.
Thus the left inverse $(8\pi^2)^{-1}\Delta S_K$ to $\Pi_0$ provided in the Theorem~\ref{thm:inv-formula} inversion formula is also a right inverse, so after applying $\pi_{0*}$  to (\ref{eq:inv-dist}) we get $\pi_{0*}w = f$. The regularity of $w$ follows directly from the mapping properties of $\Pi$.
\end{proof}

\section*{Acknowledgements}
Many thanks to Gabriel Paternain for countless helpful conversations. This material is based upon work supported by the National Science Foundation Graduate Research Fellowship under Grant No. DGE-2140004.

\bibliographystyle{alpha}
\bibliography{bib}

\end{document}